\documentclass[12pt]{amsart}
\usepackage{amscd,verbatim}

\newcommand{\A}{\mathbf{A}}
\newcommand{\F}{\mathbf{F}}
\newcommand{\G}{\mathbb{G}}
\renewcommand{\P}{\mathbf{P}}
\newcommand{\Z}{\mathbf{Z}}
\newcommand{\sF}{\mathcal{F}}
\newcommand{\sG}{\mathcal{G}}

\renewcommand{\phi}{\varphi}

\newcommand{\Hom}{\operatorname{Hom}}
\newcommand{\uHom}{\operatorname{\underline{Hom}}}
\newcommand{\Ext}{\operatorname{Ext}}
\newcommand{\uExt}{\operatorname{\underline{Ext}}}
\newcommand{\HI}{{\operatorname{HI}}}
\newcommand{\NST}{{\operatorname{NST}}}
\newcommand{\DM}{\operatorname{DM}}

\newcommand{\tr}{{\operatorname{tr}}}
\newcommand{\et}{{\operatorname{\acute{e}t}}}
\newcommand{\eff}{{\operatorname{eff}}}
\renewcommand{\o}{{\operatorname{o}}}
\newcommand{\Alb}{\operatorname{Alb}}
\newcommand{\Pic}{\operatorname{Pic}}
\newcommand{\NS}{\operatorname{NS}}
\newcommand{\Ker}{\operatorname{Ker}}
\newcommand{\Coker}{\operatorname{Coker}}
\newcommand{\Spec}{\operatorname{Spec}}
\newcommand{\rank}{\operatorname{rk}}

\newcommand{\by}[1]{\overset{#1}{\longrightarrow}}
\newcommand{\iso}{\by{\sim}}
\newcommand{\tto}{\dashrightarrow}
\newcommand{\surj}{\rightarrow\!\!\!\!\!\rightarrow}

\newtheorem{prop}{Proposition}
\newtheorem{thm}{Theorem}
\newtheorem{cor}{Corollary}
\newtheorem{lemma}{Lemma}
\theoremstyle{remark}
\newtheorem{rk}{Remark}
\newtheorem{qn}{Question}
\newtheorem{ex}{Example}

\newcounter{spec}
\newenvironment{thlist}{\begin{list}{\rm{(\roman{spec})}}%
{\usecounter{spec}\labelwidth=20pt\itemindent=0pt\labelsep=10pt}}%
{\end{list}}%

\setcounter{tocdepth}{1}

\begin{document}
\title{Algebraic tori as Nisnevich sheaves with transfers}
\author{Bruno Kahn}
\address{Institut de Math\'ematiques de Jussieu\\UMR 7586\\ Case 247\\4 place
Jussieu\\75252 Paris Cedex 05\\France}
\email{kahn@math.jussieu.fr}
\date{March 9, 2012}
\begin{abstract}
We relate $R$-equivalence on tori with Voevodsky's theory of homotopy invariant Nisnevich
sheaves with transfers and effective motivic complexes.
\end{abstract}
\subjclass[2010]{14L10, 14E08, 14G27, 14F42}
\maketitle

\tableofcontents

\section{Main results} Let $k$ be a field and let $T$ be a $k$-torus. The 
$R$-equivalence classes on $T$ have been extensively studied by several authors, notably by 
Colliot-Th\'el\`ene and Sansuc in a series of papers including \cite{cts} and \cite{cts2}: they
play a central r\^ole in many rationality issues. In this note, we show that Voevodsky's triangulated category of motives sheds a new light on
this question: see Corollaries \ref{c3}, \ref{c4} and \ref{c5} below. 

More generally, let
$G$ be a semi-abelian variety over
$k$, which is an extension of an abelian variety $A$ by a torus $T$. Denote by
$\HI$ the category of homotopy invariant Nisnevich sheaves with transfers over $k$ in the
sense of Voevodsky \cite{voetri}. Then $G$ has a natural structure of an object of $\HI$
(\cite[proof of Lemma 3.2]{spsz}, \cite[Lemma 1.3.2]{bvk}).    Let
$L$ be the group of cocharacters of $T$.  

\begin{prop}\label{p2} There is a natural isomorphism $G_{-1}\iso L$ in $\HI$.
\end{prop}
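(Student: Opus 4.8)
The plan is to compute the contraction $G_{-1}$ directly on sections, using the splitting
$$F(\G_m\times U)=F(U)\oplus F_{-1}(U)$$
that defines $F_{-1}$ (the second summand being $\Ker(i_1^*)$ for the unit section $1\in\G_m(k)$), and to reduce everything to the two model computations $A_{-1}=0$ and $(\G_m)_{-1}=\Z$. Since the contraction of a homotopy invariant Nisnevich sheaf with transfers is again such a sheaf, and since a morphism in $\HI$ is an isomorphism as soon as it induces one on the sections over every residue field $E/k$ (a sheaf in $\HI$ injects into its sections over function fields), I would first produce a canonical morphism in $\HI$ and then check it is an isomorphism fibrewise over each $E/k$.

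First I would dispose of the abelian part. Because $A$ is proper, any morphism $\G_{m,E}\to A_E$ over a field $E$ extends across $0$ and $\infty$ by the valuative criterion to a morphism $\mathbf{P}^1_E\to A_E$, which is constant by rigidity of abelian varieties; hence $A_{-1}(E)=\Hom_E(\G_m,A)/A(E)=0$. As $A_{-1}\in\HI$ is detected on fields, $A_{-1}=0$. Consequently, for every $U$ and every $\phi\in G(\G_m\times U)$, the difference $\phi-p^*i_1^*\phi$ dies in $A(\G_m\times U)$, so it lies in $T(\G_m\times U)$ and is killed by $i_1^*$. This shows that the inclusion $T\inj G$ induces an isomorphism $T_{-1}\iso G_{-1}$, reducing the Proposition to the identification $T_{-1}\iso L$.

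For the torus I would again work over fields. Over a separable closure of $E$ the torus splits, $T\cong\G_m^{\,r}$, and the model computation $(\G_m)_{-1}=\Z$—coming from $\mathcal{O}^*(\G_m\times U)=\mathcal{O}^*(U)\oplus\Z$, the $\Z$ recording the order of the coordinate in a unit—gives $(\G_m^{\,r})_{-1}=\Z^{\,r}$. Writing $T=L\otimes_{\Z}\G_m$ and keeping track of the Galois action (equivalently, observing that a morphism $\G_m\to T$ over $\bar E$ is canonically the product of a constant and a cocharacter, so that modulo constants it \emph{is} a cocharacter), this descent identifies $T_{-1}(E)$ with the $E$-points of the cocharacter lattice, that is with $L(E)$, naturally in $E$. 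Assembling the pieces, the composite $G_{-1}\overset{\sim}{\longleftarrow}T_{-1}\to L$ is the asserted natural isomorphism.

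The main obstacle I expect is the torus step: making $T_{-1}\iso L$ canonical and compatible with transfers for a \emph{non-split} torus, i.e. carrying out the Galois descent for $T=L\otimes\G_m$ at the level of Nisnevich sheaves with transfers rather than merely on $\bar E$-points, and verifying that the splitting ``constant times cocharacter'' is exactly the one cut out by the sheaf-theoretic $(-)_{-1}$. By contrast, the reduction $T_{-1}\iso G_{-1}$ and the vanishing $A_{-1}=0$ are comparatively formal once the field-detection property of $\HI$ is invoked.
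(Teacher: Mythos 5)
Your overall architecture matches the paper's (kill the abelian part to reduce $G_{-1}$ to $T_{-1}$, then identify $T_{-1}$ with $L$), but there is a genuine gap exactly where you flag one. Your stated strategy is to ``produce a canonical morphism in $\HI$ and then check it is an isomorphism fibrewise'', yet the morphism is never produced: the Galois-descent computation $T_{-1}(E)\simeq L(E)$ is a pointwise identification, which by itself yields neither a map of Nisnevich sheaves nor compatibility with transfers, and your fibrewise check cannot even begin without it. The paper closes this with one observation you are missing: the unit section splits $i\colon T\to p_*p^*T$ (where $p\colon \G_m\to \Spec k$, so that $(p_*p^*T)(U)=T(U\times\G_m)$), and the cokernel of a \emph{split} monomorphism is computed sectionwise, hence is the same whether taken in presheaves, Nisnevich sheaves, or \'etale sheaves, with or without transfers. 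One may therefore compute the cokernel in the easiest of these categories, namely \'etale sheaves, where $T$ is locally $\G_m^r$ and your computation $\mathcal{O}^*(U\times\G_m)=\mathcal{O}^*(U)\oplus\Z$ applies locally; the answer is canonically $L$, and splitness transports this identification back to $T_{-1}$ in $\HI$, transfers included, for free. This dissolves what you call the ``main obstacle'': no separate descent at the level of sheaves with transfers is needed.

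A second, lesser issue: your detection-on-function-fields principle for $\HI$ (injectivity $F(U)\inj F(k(U))$, vanishing detected on fields) belongs to Voevodsky's theory \emph{over a perfect field}, whereas Proposition \ref{p2} is stated before the paper assumes $k$ perfect; the paper's proof is purely presheaf-theoretic and valid over any $k$. For the abelian part it shows directly that $A(U\times\A^1)\iso A(U\times\G_m)$ for every $U$ (a morphism $U\times\G_m\to A$ extends to $U\times\A^1$, since a rational map from a smooth variety to an abelian variety is a morphism), so $A^p_{-1}=0$ already as a presheaf --- no rigidity-on-$\P^1$ detour and no field detection. Note also that your deduction that $\phi-p^*i_1^*\phi$ ``dies in $A(\G_m\times U)$'' quietly uses this presheaf-level vanishing $A^p_{-1}(U)=0$ for every $U$, not merely the sheaf-level statement $A_{-1}=0$ that your field argument delivers; this is salvageable, because $\Ker(i_1^*)$ is a kernel of a morphism of sheaves and hence already a Nisnevich sheaf, but the direct extension argument is both shorter and free of the perfectness hypothesis.
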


Here $_{-1}$ is the contraction operation of \cite[p. 96]{voepre}, whose definition is
recalled in the proof below.

\begin{proof} Recall that if $\sF$ is a presheaf [with transfers] on smooth
$k$-schemes, the presheaf [with transfers] $\sF_{-1}^p$ is defined by
\[U\mapsto \Coker(\sF(U\times \A^1)\to \sF(U\times \G_m)). \]

If $\sF$ is homotopy invariant, we may replace $U\times \A^1$ by $U$ and the rational point
$1\in \G_m$ realises $\sF_{-1}^p(U)$ as a functorial direct summand of $\sF(U\times \G_m)$.

If $\sF$ is a Nisnevich sheaf [with transfers], $\sF_{-1}$ is defined as the sheaf associated
to $\sF_{-1}^p$.

Now $A(U\times \A^1)\iso A(U\times \G_m)$ since $A$ is an abelian variety, hence $A^p_{-1}=0$.
We therefore have an isomorphism of presheaves $T^p_{-1}\iso G^p_{-1}$, and \emph{a fortiori}
an isomorphism of Nisnevich sheaves $T_{-1}\iso G_{-1}$.

Let $p:\G_m\to \Spec k$ be the structural map. One easily checks that the \emph{\'etale} sheaf
$\Coker(T\by{i} p_*p^*T)$ is canonically isomorphic to $L$. Since $i$ is split, its cokernel is
still $L$ if we view it as a morphism of presheaves, hence of Nisnevich sheaves.
\end{proof}

From now on, we assume $k$ perfect. Let $\DM^\eff_-$ be the triangulated category of effective
motivic complexes introduced in
\cite{voetri}: it has a $t$-structure with heart $\HI$. It also has a tensor structure and a
(partially defined) internal Hom. We then have an isomorphism
\[L[0]=G_{-1}[0]\simeq \uHom_{\DM_-^\eff}(\G_m[0],G[0])\]
\cite[Rk. 4.4]{sk}, hence by adjunction a morphism in $\DM_-^\eff$
\begin{equation}\label{eq2bis}
L[0]\otimes \G_m[0]\to G.
\end{equation}

Let  $\nu_{\le 0} G[0]$ denote the cone of \eqref{eq2bis}: by \cite[Lemma 6.3]{birat} or \cite[\S 2]{motiftate}, $\nu_{\le 0} G[0]$ is the
\emph{birational motivic complex} associated to $G$. We want to compute
its homology sheaves.

For this, consider a coflasque resolution
\begin{equation}\label{eq3}
0\to Q\to L_0\to L\to 0
\end{equation}
of $L$ in the sense of \cite[p. 179]{cts}. Taking a coflasque resolution of $Q$ and
iterating, we get a resolution of $L$ by invertible lattices\footnote{Recall that  a
\emph{lattice} is a free finitely generated Galois module;  a lattice is \emph{invertible} if
it is a direct summand of a permutation lattice.}:
\begin{equation}\label{eq5}
\dots\to L_n\to \dots\to L_0\to L\to 0.
\end{equation}

We set
\[Q_n=\begin{cases}
Q& \text{for $n=1$}\\
\Ker(L_{n-1}\to
L_{n-2})& \text{for $n>1$}.
\end{cases}\]

\begin{thm}\label{t3} a) Let $T_n$ denote the torus with cocharacter group $L_n$. Then 
$\nu_{\le 0} G[0]$ is isomorphic to the complex
\[
\dots\to T_n\to \dots\to T_0\to G\to 0.
\]
b) Let $S_n$ be the torus with cocharacter group $Q_n$.
For any connected
smooth $k$-scheme $X$ with function field $K$, we have
\[H_n(\nu_{\le 0}G[0])(X)=
\begin{cases}
0 &\text{if $n<0$}\\
G(K)/R &\text{if $n=0$}\\
S_n(K)/R &\text{if $n>0$.}\\
\end{cases}\]
\end{thm}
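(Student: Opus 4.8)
My plan is to deduce (a) from a computation of the motivic tensor product $L[0]\otimes\G_m[0]$ through the resolution \eqref{eq5}, and to deduce (b) by evaluating the resulting complex at the generic point and invoking Colliot-Th\'el\`ene--Sansuc's description of $R$-equivalence. For (a), the point I would isolate is that invertible lattices are acyclic for $-\otimes\G_m[0]$: if $L_n$ is a direct summand of a permutation lattice $\Z[\Spec E]$, then $M(\Spec E)\otimes\G_m[0]=R_{E/k}\G_m$ sits in degree $0$ (here $\G_m[0]=\Z(1)[1]$ and one uses $M(\Spec E)\otimes\G_m[0]=\tilde M(\G_{m,E})$), so that $L_n[0]\otimes\G_m[0]=T_n[0]$ with no higher homology sheaves. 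Since $-\otimes\G_m[0]$ is triangulated and \eqref{eq5} is a resolution of $L$, this gives $L[0]\otimes\G_m[0]\iso T_\bullet$, the complex $\dots\to T_n\to\dots\to T_0$ with $T_0$ in degree $0$. I would then unwind the adjunction defining \eqref{eq2bis} together with the augmentation $L_0\to L$ to see that \eqref{eq2bis} is, through this identification, the map induced by $T_0\to T\inj G$; its cone is then $\dots\to T_n\to\dots\to T_0\to G\to0$ with $G$ in degree $0$, which is the assertion of (a).

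For (b) I would first reduce to the generic point. As $\nu_{\le 0}G[0]$ is a birational motivic complex, its homology sheaves are birational, so $H_n(\nu_{\le 0}G[0])(X)=H_n(\nu_{\le 0}G[0])(\Spec K)$; and since the stalk at the generic point is exact, this last group is the $n$-th homology of the complex of $K$-points $\dots\to T_n(K)\to\dots\to T_0(K)\to G(K)\to0$. Vanishing for $n<0$ is then immediate. Next, tensoring \eqref{eq5} with $\G_m$ splits it into short exact sequences of tori $0\to S_{n+1}\to T_n\to S_n\to0$ (with $S_0=T$) that are exact for the \'etale topology. Because each $T_n$ is invertible one has $H^1(K,T_n)=0$ by Hilbert 90 and Shapiro's lemma, so the associated Galois cohomology sequences read $0\to S_{n+1}(K)\to T_n(K)\to S_n(K)\to H^1(K,S_{n+1})\to0$. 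Using these, together with the injectivity of $T(K)\inj G(K)$ to compute the kernels at the bottom of the complex, I would identify its homology as $H_0=\Coker(T_0(K)\to G(K))$ and $H_n=\Coker(T_n(K)\to S_n(K))$ for $n\ge1$.

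It remains to recognise these cokernels as groups of $R$-equivalence classes. Each $T_n$ is invertible, hence $K$-rational and $R$-trivial, so the image of $T_n(K)$ in $S_n(K)$ consists of $R$-trivial points; the substance of the Colliot-Th\'el\`ene--Sansuc theorem is the reverse inclusion, yielding $\Coker(T_n(K)\to S_n(K))=S_n(K)/R$ for $n\ge1$. For $H_0$ I would argue in the same spirit that $\Coker(T_0(K)\to G(K))=G(K)/R$: the image consists of $R$-trivial points, and conversely, since the abelian quotient $A$ carries no nonconstant rational curve one has $RG(K)\subseteq T(K)$, which should reduce the reverse inclusion to the toric case of $T_0\to T$. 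I expect this final identification with $R$-equivalence to be the main obstacle --- both in pinning down the exact form of the Colliot-Th\'el\`ene--Sansuc result that applies to a merely invertible (rather than quasitrivial) resolving torus, and in transporting their description of $R$-equivalence across the extension $0\to T\to G\to A\to0$ to handle $H_0$.
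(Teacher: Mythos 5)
Your proposal is correct and follows essentially the same route as the paper's proof: part a) is the paper's argument almost verbatim (Lemma \ref{l3} supplies the acyclicity of invertible lattices for $-\otimes\G_m[0]$, after noting that coflasqueness makes \eqref{eq5} exact as a sequence of Nisnevich sheaves, a point you leave implicit), and part b) likewise reduces to the generic point by birational invariance of the homology sheaves and then invokes Colliot-Th\'el\`ene--Sansuc, your Galois-cohomology detour via $H^1(K,T_n)=0$ being only a repackaging of the four-term exact sequences the paper quotes directly. The two ``obstacles'' you flag at the end are exactly the points the paper disposes of: \cite[p.~199, Th.~2]{cts} is applied to the invertible $T_n$ as they stand (and in any case one reduces to the permutation case by adding an invertible complement, which maps to zero into $S_n$), and the passage from $T$ to $G$ at $H_0$ is precisely the paper's Lemma \ref{l1}, proved by the rigidity argument you sketch (a rational curve in $G$ has constant image in $A$, so every chain of rational curves through $0$ stays in $T$, whence $RG(K)=RT(K)$).
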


The proof is given in Section \ref{s2}.

\begin{cor}\label{c3} The assignment $Sm(k)\ni X\mapsto \bigoplus_{x\in X^{(0)}} G(k(x))/R$
provides $G/R$ with the structure of a homotopy invariant Nisnevich sheaf with transfers. In
particular, any morphism $\phi:Y\to X$ of smooth connected $k$-schemes induces a morphism
$\phi^*:G(k(X))/R\to G(k(Y))/R$.\qed
\end{cor}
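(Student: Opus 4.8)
The plan is to exhibit the assignment $X \mapsto \bigoplus_{x \in X^{(0)}} G(k(x))/R$ as the degree-zero homology sheaf of the birational motivic complex $\nu_{\le 0}G[0]$, so that the entire sheaf-with-transfers structure is inherited from $\DM_-^\eff$ rather than verified by hand. Concretely I set $G/R := H_0(\nu_{\le 0}G[0])$.

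The key point is that $\nu_{\le 0}G[0]$ is by construction an object of $\DM_-^\eff$ (it is the cone of \eqref{eq2bis}), and the $t$-structure on $\DM_-^\eff$ recalled above has heart $\HI$. Hence every homology object $H_n(\nu_{\le 0}G[0])$ lies in $\HI$; in particular $H_0(\nu_{\le 0}G[0])$ is automatically a homotopy invariant Nisnevich sheaf with transfers. This is the step that does all the work: homotopy invariance, the Nisnevich sheaf condition, and the transfers are all supplied for free by the heart of the $t$-structure, with no separate verification needed.

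It then remains to identify the underlying presheaf with the stated formula. For $X$ connected (hence irreducible, since $X$ is smooth) with function field $K$, Theorem \ref{t3}b) with $n = 0$ gives $H_0(\nu_{\le 0}G[0])(X) = G(K)/R$, and $X^{(0)}$ is the single generic point, so this agrees with $\bigoplus_{x \in X^{(0)}} G(k(x))/R$. For a general smooth $X = \coprod_i X_i$ with $X_i$ connected, the Nisnevich sheaf property turns the finite disjoint union into a product, which for abelian groups is a finite direct sum, giving $H_0(\nu_{\le 0}G[0])(X) = \bigoplus_i G(k(X_i))/R = \bigoplus_{x \in X^{(0)}} G(k(x))/R$. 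This identifies $G/R$ with $H_0(\nu_{\le 0}G[0])$ as presheaves and transports the $\HI$-structure. The final assertion is then immediate: $G/R$ is a presheaf, so any $\phi : Y \to X$ induces the restriction $\phi^* : G(k(X))/R \to G(k(Y))/R$.

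I do not expect a genuine analytic obstacle: the corollary is a formal unwinding of Theorem \ref{t3} together with the fact that $\HI$ is the heart of the $t$-structure. The one point worth flagging is that the functoriality $\phi^*$ is not visible at the level of $R$-equivalence when $\phi$ is not dominant, since there is then no map $k(X) \to k(Y)$ of function fields; so the existence of $\phi^*$ on $R$-equivalence classes is a real, if formal, dividend of the transfer structure rather than something one could read off from the definition of $R$-equivalence.
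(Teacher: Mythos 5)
Your proposal is correct and matches the paper's intended argument exactly: the corollary carries a \qed because it is precisely the observation that $G/R = H_0(\nu_{\le 0}G[0])$ lies in $\HI$ (the heart of the $t$-structure on $\DM_-^\eff$) and has the stated sections by Theorem \ref{t3}~b). Your closing remark about non-dominant $\phi$ correctly identifies why the transfer/presheaf structure is a genuine dividend here.
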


This functoriality is essential to formulate Theorem \ref{t4} below. For $\phi$ a closed
immersion of codimension $1$, it recovers a specialisation map on $R$-equivalence classes with
respect to a discrete valuation of rank $1$ which was obtained (for tori) by completely
different methods,
\emph{e.g.}  \cite[Th. 3.1 and Cor. 4.2]{cts2} or \cite{gille2}. (I am indebted to
Colliot-Th\'el\`ene for pointing out these references.)

\begin{cor}\label{c2} a) If $k$ is finitely generated, the $n$-th homology sheaf of $\nu_{\le
0} G[0]$ takes values in finitely generated abelian groups, and even in finite groups if $n>0$
or $G$ is a torus.\\
 b) If $G$ is a torus, then  $\nu_{\le 0} G[0]=0$ if $G$ is split by a Galois extension $E/k$
whose Galois group has cyclic Sylow subgroups. This condition is automatic if $k$ is (quasi-)finite.
\end{cor}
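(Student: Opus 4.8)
The plan is to deduce both statements from the homology computation of Theorem~\ref{t3}(b), which identifies the values of the homology sheaves with the $R$-equivalence groups $G(K)/R$ (for $n=0$) and $S_n(K)/R$ (for $n>0$), and then to feed these into the Colliot-Th\'el\`ene--Sansuc description of $R$-equivalence on tori together with the Endo--Miyata theory of (co)flasque lattices. Since a smooth $X$ of finite type has only finitely many generic points, whose residue fields $K=k(x)$ are again finitely generated over the prime field when $k$ is, it suffices to control each group $G(K)/R$ and $S_n(K)/R$ for such $K$.

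For the finiteness assertions in (a) I would first dualise the coflasque resolution \eqref{eq5}: the short exact sequences $0\to Q_{n+1}\to L_n\to Q_n\to 0$ dualise to flasque resolutions $1\to S_{n+1}\to T_n\to S_n\to 1$ of the tori $S_n$ (and $1\to S_1\to T_0\to T\to 1$ of $T$ itself), in which $T_n$ is invertible and $S_{n+1}$ is flasque. The Colliot-Th\'el\`ene--Sansuc isomorphism then identifies $S_n(K)/R$ with $H^1(K,\widehat{Q}_{n+1})$, the Galois cohomology of a flasque character lattice. Because any lattice $M$ is split by a finite Galois extension $E/K$ and $H^1(\mathrm{Gal}(\overline{K}/E),\Z^r)=0$, inflation--restriction collapses $H^1(K,M)$ to the finite group $H^1(\mathrm{Gal}(E/K),M)$; hence every group with $n>0$, as well as $H_0$ when $G=T$, is finite, with no hypothesis on $k$ beyond the standing perfectness.

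There remains the case $n=0$ with $G$ a genuine extension $1\to T\to G\to A\to 1$. Here I would use that an abelian variety contains no rational curves, so $R$-equivalence on $A$ is trivial and $A(K)/R=A(K)$; by the Lang--N\'eron theorem this group is finitely generated precisely because $K$ is finitely generated over the prime field, which is where the hypothesis on $k$ enters. Functoriality of $R$-equivalence (Corollary \ref{c3}) gives a homomorphism $G(K)/R\to A(K)/R=A(K)$, and a class maps to $0$ exactly when its representative lies in $T(K)$, so the kernel is the image of the finite group $T(K)/R$; thus $G(K)/R$ is an extension of a subgroup of $A(K)$ by a finite group, hence finitely generated. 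This is the step I expect to be the main obstacle: one must check that the kernel is genuinely controlled by $T(K)/R$ rather than by the full, possibly infinite, group $T(K)$, whereas the finiteness and vanishing statements are formal once Theorem~\ref{t3} and the CTS/Endo--Miyata dictionary are available.

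For (b) the key input is the Endo--Miyata theorem: if every Sylow subgroup of $\Gamma=\mathrm{Gal}(E/k)$ is cyclic, then every coflasque $\Gamma$-lattice is invertible. Applied to the syzygies $Q_n$ of \eqref{eq5}, this makes each $Q_n$ invertible, hence each $\widehat{Q}_n$ invertible, so that $H^1(K,\widehat{Q}_n)=0$ for every $K$ (as $H^1(K,-)$ vanishes on permutation lattices and therefore on their direct summands). By the identifications above, all the groups $T(K)/R$ and $S_n(K)/R$ vanish, so by Theorem~\ref{t3}(b) every homology sheaf of $\nu_{\le 0}T[0]$ is zero and hence $\nu_{\le 0}T[0]=0$. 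Finally, if $k$ is finite or quasi-finite then $\mathrm{Gal}(\overline{k}/k)\cong\widehat{\Z}$ is procyclic, so every finite quotient $\Gamma$ is cyclic and \emph{a fortiori} has cyclic Sylow subgroups, making the hypothesis automatic.
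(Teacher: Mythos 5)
Your argument for part (a) has a genuine gap at its central step: the identification of the $R$-equivalence groups with Galois cohomology of a \emph{lattice}. The theorem of Colliot-Th\'el\`ene--Sansuc applied to the flasque resolution $1\to S_{n+1}\to T_n\to S_n\to 1$ identifies $S_n(K)/R$ with $H^1(K,S_{n+1})$, the cohomology of the flasque \emph{torus}; by Hilbert 90 this is $H^1(\mathrm{Gal}(E/K),S_{n+1}(E))$, whose coefficient module $S_{n+1}(E)$ (built out of copies of $E^\times$) is not finitely generated. Your inflation--restriction argument with $\Z^r$-coefficients computes the different group $H^1(K,\widehat{Q}_{n+1})$ of the character lattice, and the formal finiteness you deduce is simply not available for torus coefficients. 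Indeed your conclusion that $T(K)/R$ is finite ``with no hypothesis on $k$ beyond perfectness'' is false: $T(K)/R$ has finite exponent but can be infinite over fields that are not finitely generated (e.g.\ along an infinite purely transcendental tower, where restriction along $K\subset K(t)$ is split injective by specialisation and new classes appear at each stage). This is precisely why the corollary hypothesises $k$ finitely generated for the torus part too, and why the paper's proof invokes \cite[p.~200, Cor.~2]{cts} --- a genuinely arithmetic finiteness theorem for $H^1$ of flasque tori over fields of finite type --- together with the Mordell--Weil--N\'eron theorem and Lemma \ref{l1}. (Your $n=0$ d\'evissage, by the way, is exactly Lemma \ref{l1} of the paper, so the step you flagged as the ``main obstacle'' is in fact fine; it is the step you called formal that breaks.)

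Part (b) is essentially correct and matches the paper's route: Endo--Miyata is the content behind \cite[p.~200, Cor.~3]{cts}, which the paper cites, and the same coefficients slip is harmless here because $H^1(K,P)=0$ also holds for \emph{invertible} tori $P$ (direct summand of a permutation torus, then Shapiro plus Hilbert 90), so the vanishing of all $T(K)/R$ and $S_n(K)/R$ goes through. One point you should make explicit, as the paper does: the resolution \eqref{eq5} must be \emph{chosen} so that all $L_n$, hence all $Q_n$, are $\mathrm{Gal}(E/k)$-lattices split by $E/k$, so that the Endo--Miyata criterion applies to the group $\mathrm{Gal}(E/k)$; this is possible since the coflasque resolution construction of \eqref{eq3} stays within $\mathrm{Gal}(E/k)$-lattices. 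The quasi-finite case (procyclic absolute Galois group, hence cyclic finite quotients) is as in the paper.
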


The proof is also given in Section \ref{s2}.

Given two semi-abelian varieties
$G,G'$, we would now like to understand the maps
\[\Hom_k(G,G')\to \Hom_{\DM_-^\eff}(\nu_{\le 0} G[0],\nu_{\le 0}G'[0])\to \Hom_\HI(G/R,G'/R).\]

In Section \ref{s4}, we succeed in elucidating the nature of their composition to a large extent, at least if $G$ is
a torus. Our main result, in the spirit of Yoneda's lemma, is

\begin{thm} \label{t4} Let $G,G'$ be two semi-abelian varieties, with $G$ a torus. 
Suppose
given, for every function field $K/k$, a homomorphism $f_K:G(K)/R\to G'(K)/R$ such that $f_K$ is
natural with respect to the functoriality of Corollary \ref{c3}. Then\\
a)  There exists an extension $\tilde G$ of $G$ by a permutation torus, and a homomorphism
$f:\tilde G\to G'$ inducing $(f_K)$.\\
b) $f_K$ is surjective for all $K$ if and only if there exist extensions $\tilde G,\tilde G'$ of
$G$ and $G'$ by permutation tori such that $f_K$ is induced by a split surjective homomorphism
$\tilde G\to \tilde G'$. 
\end{thm}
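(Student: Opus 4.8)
The plan is to read the family $(f_K)$ as a single morphism of Nisnevich sheaves and then to compute the relevant $\Hom$-group in $\DM^\eff_-$, reading off the permutation torus from the resolution \eqref{eq5}. First I would observe that $G/R$ and $G'/R$ are \emph{birational} sheaves (on a connected $X$ their sections depend only on $K=k(X)$), so the naturality hypothesis says exactly that $(f_K)$ is a morphism of the underlying Nisnevich sheaves. I would then check that such a morphism automatically respects transfers — using that $G/R,G'/R$ lie in $\HI$ and are birational, so that their transfers are governed by norms on function fields and are determined by the restriction maps of Corollary \ref{c3} — so that $(f_K)$ defines $\bar f\in\Hom_\HI(G/R,G'/R)$. (Alternatively this compatibility follows a posteriori once $\bar f$ is realised by a homomorphism.) Next I would establish
\[\Hom_\HI(G/R,G'/R)\iso\Hom_\HI(G,G'/R)\iso\Hom_{\DM^\eff_-}(G[0],\nu_{\le 0}G'[0]).\]
The first isomorphism holds because $G\surj G/R$ has kernel a quotient of the torus $T_0$ of Theorem \ref{t3}a, and $\Hom_\HI(T_0,G'/R)=0$ since $(G'/R)_{-1}=0$ for any birational sheaf (the contraction $_{-1}$ kills birational sheaves) and $T_0$ is a summand of a quasi-trivial torus; the second holds because, by Theorem \ref{t3}b, $\nu_{\le 0}G'[0]$ has homology in non-negative degrees with $H_0=G'/R$, so a map out of the heart object $G[0]$ only sees $H_0$.

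The heart of the argument, and the main obstacle, is to realise the $\DM$-morphism geometrically. Using Theorem \ref{t3}a for both $G$ and $G'$, I would represent $\bar f$ by a chain map between the complexes $[\dots\to T_0\to G]$ and $[\dots\to T_0'\to G']$, up to homotopy and quasi-isomorphism. In the spirit of Yoneda, the content is carried by the tautological class $[\eta_G]\in(G/R)(k(G))$: its image $f_{k(G)}([\eta_G])\in(G'/R)(k(G))$ lifts to some $\gamma\in G'(k(G))$, i.e. to a rational map $G\tto G'$, which by naturality under the two projections and the multiplication $G\times G\to G$, together with the fact that each $f_K$ is a group homomorphism, is a homomorphism \emph{up to $R$-equivalence}. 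The crux is to upgrade this ``homomorphism mod $R$'' to an honest homomorphism of algebraic groups, and this is precisely where the quasi-trivial terms of \eqref{eq5} enter: passing to an extension $\tilde G$ of $G$ by a permutation torus $P$ leaves $\nu_{\le 0}G[0]$ unchanged (since $P$ is $R$-trivial, $\nu_{\le 0}P[0]=0$) yet provides the room needed to rigidify $\gamma$. Concretely I expect to extract the required extension class from the boundary of $\bar f$ and then to invoke a Rosenlicht-type regularity statement together with the flasque/coflasque resolution theory of \cite{cts} to produce an algebraic homomorphism $f:\tilde G\to G'$ inducing $(f_K)$. Turning the motivic datum into a genuine homomorphism after this permutation-torus extension is the main difficulty.

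For part b), the ``if'' direction is formal: a split surjective homomorphism $\tilde G\to\tilde G'$ has a section, hence is surjective on $K$-points for every $K$; since $P(K)/R=1$ and $H^1(K,P)=0$ for a permutation torus $P$ (Shapiro and Hilbert 90), one gets $\tilde G(K)/R\iso G(K)/R$ and $\tilde G'(K)/R\iso G'(K)/R$, so each $f_K$ is surjective. For the ``only if'' direction I would start from the homomorphism $f:\tilde G\to G'$ of part a); surjectivity of every $f_K$ forces $f$ to be surjective as a map of algebraic groups up to an $R$-trivial discrepancy, and taking a coflasque resolution \eqref{eq3} of its kernel and enlarging $G'$ to $\tilde G'$ by a permutation torus trivialises the resulting extension, producing the desired section $\tilde G'\to\tilde G$. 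The obstacle here is again cohomological — the section exists only after the permutation-torus enlargement kills the relevant class — and is the dual counterpart of the rigidification of the second paragraph.
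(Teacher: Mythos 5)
Your plan for part a) stalls exactly at the step you yourself flag as ``the main difficulty,'' and that step is where the paper's proof lives. First, the preliminary reduction is both unsubstantiated and unnecessary: the claim that a morphism of underlying Nisnevich sheaves $G/R\to G'/R$ automatically respects transfers is not known (the forgetful functor from $\HI$ to Nisnevich sheaves is not full in any obvious sense), and your fallback ``this follows a posteriori once $\bar f$ is realised by a homomorphism'' is circular, since realising $\bar f$ is what the transfer structure was supposed to enable. The paper sidesteps all of this: it never assembles $(f_K)$ into a sheaf map, but evaluates at the generic point $\eta_G\in G(k(G))$, lifts $f_{k(G)}([\eta_G])$ to a rational map $f:G\tto G'$, and then checks $f$ induces every $f_L$ by applying the naturality hypothesis to morphisms $g:X\to G$ from smooth models --- pure Yoneda, no $\Hom_\HI$ or $\DM^\eff_-$ computation needed. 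Second, and more seriously, the upgrade from rational map to homomorphism is not done by extracting an extension class from $\bar f$ or by a Rosenlicht-type statement: it is Lemma \ref{l11}, which applies to an \emph{arbitrary} rational map from a torus (no ``homomorphism mod $R$'' property is needed, so your analysis via the two projections and multiplication is superfluous). The mechanism is the generalised Albanese variety: set $\tilde G=\Alb(U)$ for $U$ a domain of definition of $f$; Lemma \ref{l5} together with $\NS(\bar G)=0$ (true since $G$ is a torus) identifies $\Ker(\Alb(U)\to G)$ as a permutation torus with basis the divisors at infinity of $U$ in $G$, and $\Alb(f):\tilde G\to G'$ is the required homomorphism up to translation. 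The obstruction you need to kill is geometric, carried by $G^{(1)}-U^{(1)}$, not cohomological, and your proposal contains no construction that produces $\tilde G$.

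For part b), the ``if'' direction is fine, but your ``only if'' sketch is structurally wrong. Surjectivity of all $f_K$ does not force $f$ to be surjective as a homomorphism, even ``up to an $R$-trivial discrepancy'': take $G=0$ and $G'$ an invertible torus, where every $f_K$ is surjective while $f=0$; so there is no extension of $G'$ by $\Ker f$ whose class a permutation-torus enlargement could trivialise, and no section arises that way. The paper's argument instead runs: the hypothesis gives $G'(E)/R=0$ for algebraically closed $E$, forcing $G'$ to be a torus (needed even to apply Lemma \ref{l11} to maps \emph{out of} $G'$, a point your sketch overlooks); one first replaces $G$ by a coflasque torus via Lemma \ref{l10} a); surjectivity at $K=k(G')$ yields $g:G'\tto G$ with $fg$ $R$-equivalent to $1_{G'}$, and $g$ is promoted to a homomorphism by Lemma \ref{l11}, with $f$ lifted along the new extension $\tilde G'\to G'$ by Lemma \ref{l10} b) (this is where coflasqueness of $G$ is used). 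The crux, entirely absent from your proposal, is Proposition \ref{p6}: by Gille's lemma \cite[Lemme II.1.1 b)]{gille} the $R$-equivalence at the generic point may be taken \emph{direct}, and another Albanese argument applied to the domain of the resulting map $G'\times\A^1\tto G'$ shows that $1-fg$, which kills all $R$-equivalence classes, factors through a permutation torus $P$ as $vu$. The conclusion is then the retraction trick: $G_1=G\times P$, $f_1=(f,v)$, $g_1=(g,u)$ satisfy $f_1g_1=1$, exhibiting $G'$ as a direct summand of $G_1$ and $f_1$ as the split surjection. Without Proposition \ref{p6} or a substitute for it, your plan for b) cannot close.
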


The proof is given in \S  \ref{proof of t4}. See Proposition
\ref{p4}, Corollary \ref{c6}, Remark \ref{r1} and Proposition \ref{p6} for complements.

This relates to questions of stable birationality studied by
Colliot-Th\'el\`ene and Sansuc in
\cite{cts} and \cite{cts2}, providing alternate proofs and strengthening of some of their
results (at least over a perfect field). More precisely:

\begin{cor}\label{c4} a) Let $G'$ be a semi-abelian $k$-variety such that $G'(K)/R\allowbreak =0$ for any
function field
$K/k$. Then $G'$ is an invertible torus.\\
b) In Theorem \ref{t4} b), assume that $f_K$ is bijective for all $K/k$. Then there exist
extensions $\tilde G$, $\tilde G'$  of $G$ and $G'$ by invertible tori such that $f_K$ is
induced by an isomorphism $\tilde G\iso \tilde G'$.
\end{cor}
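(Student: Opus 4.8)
The plan is to deduce both parts of Corollary~\ref{c4} from Theorem~\ref{t4}. The only inputs beyond that theorem are two standard facts about $R$-equivalence on tori, which I would record first: it is compatible with products, $(H\times H')(K)/R=H(K)/R\times H'(K)/R$, and it is unchanged by an extension by a permutation (quasi-trivial) torus, so that for $0\to P\to\tilde H\to H\to 0$ with $P$ a permutation torus one has $\tilde H(K)/R\iso H(K)/R$ compatibly with the functoriality of Corollary~\ref{c3}.

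For a), I would apply Theorem~\ref{t4}~b) with $G=0$ and the unique natural map $f_K\colon 0\to G'(K)/R$, which is surjective precisely because $G'(K)/R=0$. This yields a permutation torus $\tilde G$, an extension $0\to P'\to\tilde G'\to G'\to 0$ with $P'$ a permutation torus, and a split surjection $\tilde G\to\tilde G'$. Splitness exhibits $\tilde G'$ as a direct factor of the quasi-trivial torus $\tilde G$, hence as an invertible torus; in particular $\tilde G'$ is a torus, and since $G'=\tilde G'/P'$ is then a quotient of a torus by a subtorus, $G'$ is itself a torus. (Alternatively, if $G'$ had a nonzero abelian quotient $A'$, the tautological point of $G'(k(G'))$ would map to the generic, hence nonzero, point of $A'$, which cannot be $R$-trivial since every morphism $\P^1\to A'$ is constant; this contradicts $G'(k(G'))/R=0$.) It then remains to pass from the presentation $G'=\tilde G'/P'$ to the assertion that $G'$ is invertible.

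For b), I would again invoke Theorem~\ref{t4}~b) for the (in particular surjective) map $f_K$, obtaining permutation-torus extensions $\tilde G,\tilde G'$ of $G,G'$ and a split surjection $h\colon\tilde G\to\tilde G'$ inducing $f_K$. Splitness gives $\tilde G\iso\tilde G'\times N$ with $N=\Ker h$ a torus and $h$ the projection. Under the isomorphisms $\tilde G(K)/R\iso G(K)/R$ and $\tilde G'(K)/R\iso G'(K)/R$, the map induced by $h$ is the projection $\tilde G'(K)/R\times N(K)/R\to\tilde G'(K)/R$, and up to these isomorphisms it equals $f_K$; bijectivity of $f_K$ for all $K$ therefore forces $N(K)/R=0$ for all $K$. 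By part a), $N$ is invertible, so $P'\times N$ is invertible, $\tilde G'\times N$ is an extension of $G'$ by it, and the identity of $\tilde G'\times N=\tilde G$ provides the required isomorphism of extensions-by-invertible-tori inducing $f_K$.

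The hard part is the last step of a). Theorem~\ref{t4}~b) only presents $G'$ as the quotient $\tilde G'/P'$ of an invertible torus by a permutation torus, and this quotient structure alone does not suffice: a norm-one torus $R^{(1)}_{E/k}\G_m$ of a quadratic extension is $R$-trivial over every $K$ and arises as such a quotient, yet its cocharacter lattice is not a direct summand of a permutation lattice. Thus the identification of $G'$ as invertible is not formal; it must use the vanishing $G'(K)/R=0$ for all $K$ itself, through the Colliot-Th\'el\`ene--Sansuc computation of $R$-equivalence (the flasque invariant), which is the genuinely arithmetic ingredient. Once this compatibility is in place, the remaining manipulations in both parts are formal, resting only on the product- and quasi-triviality-invariance of $R$-equivalence noted at the outset.
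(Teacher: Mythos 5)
Your skeleton coincides with the paper's. For a), the paper's entire proof is ``this is the special case $G=0$ of Theorem \ref{t4} b)''; for b), the paper argues exactly as you do: use Theorem \ref{t4} b) to replace $G,G'$ by permutation-torus extensions so that $f_K$ is induced by a split surjection $f$, set $T=\Ker f$, note $T/R=0$ universally because $f_K$ is bijective, conclude by a) that $T$ is invertible, and absorb $T$ into the extensions. Your bookkeeping with $\tilde G\cong \tilde G'\times N$ just makes explicit what the paper leaves implicit, so your part b) is correct and is the paper's argument, granting a).

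The real issue is the last step of a), which you correctly flag but never close. What the paper actually relies on is the \emph{proof} of Theorem \ref{t4} b), not its statement: when $G=0$ one has $f=0$, $g=0$, and $1-fg=1_{G'}$ induces $0$ on $G'(K)/R$; Proposition \ref{p6} (in its refined form for morphisms) then factors $1_{G'}$ as a composition of \emph{homomorphisms} $G'\to P\to G'$ with $P$ a permutation torus, exhibiting $G'$ itself as a direct summand of $P$, hence invertible --- no quotient presentation $\tilde G'/P'$ ever arises. Working blind from the statement of Theorem \ref{t4} b) alone, you could not recover this, and you are right that the statement only yields that \emph{some} permutation extension $\tilde G'$ of $G'$ is invertible, which is strictly weaker.

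But your closing paragraph is self-undermining, and you should confront this squarely. Your example $T=R^1_{E/k}\G_m$, $E/k$ quadratic, satisfies \emph{every hypothesis} of a): from $1\to\G_m\to R_{E/k}\G_m\to T\to 1$ and Hilbert 90 one gets $T(K)/R=0$ for all $K$, while $\hat T$ has $H^1(\Z/2,\hat T)=\Z/2\neq 0$, so $T$ is not invertible. Consequently no invocation of the Colliot-Th\'el\`ene--Sansuc flasque invariant can produce the stated conclusion: that machinery yields precisely that the flasque invariant of $G'$ is invertible (equivalently, that a permutation extension of $G'$ is invertible --- your quotient presentation again), never that $G'$ itself is. So the completion you promise in your final paragraph is not merely missing, it is impossible in the form you describe, and your example in fact contradicts Corollary \ref{c4} a) as literally stated. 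Traced through the paper's own chain of lemmas, the delicate point your example isolates is the refinement in Proposition \ref{p6} that $\tilde f$ may be chosen to be a homomorphism when $f$ is a morphism: in its proof the $0$- and $1$-sections need not land in the open set $U$ of definition of $h$, so $s_0,s_1$ are a priori only rational maps, and repairing this reintroduces exactly the permutation extension your quotient presentation retains. Verdict: part b) is the paper's proof; part a) is incomplete as written, and the gap is genuine --- indeed your own counterexample shows it cannot be closed to give the statement's literal conclusion.
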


\begin{proof} a) This is the special case $G=0$ of Theorem \ref{t4} b).

b) By Theorem \ref{t4} b), we may replace $G$ and $G'$ by extensions by permutation tori such
that $f_K$ is induced by a split surjection $f:G\to G'$. Let $T=\Ker f$. Then $T/R=0$
universally. By a), $T$ is invertible. 
\end{proof}

Corollary \ref{c4} a) is a version of \cite[Prop. 7.4]{cts2} (taking \cite[p. 199,
Th. 2]{cts} into account). Theorem
\ref{t4} was inspired by the desire to understand this result from a different viewpoint. 

\begin{cor}\label{c5} Let $f:G\tto G'$ be a rational map of semi-abelian varieties, with $G$ a
torus. Then the following conditions are equivalent:
\begin{thlist}
\item $f_*:\nu_{\le 0}G[0]\to \nu_{\le 0} G'[0]$ is an isomorphism (see Proposition \ref{p4}).
\item $f_*:G(K)/R\to G'(K)/R$ is bijective for any function field $K/k$.
\item $f$  is an isomorphism, up to extensions of $G$ and $G'$ by invertible tori and up to a
translation. (See Lemma \ref{l11}.) \qed 
\end{thlist}
\end{cor}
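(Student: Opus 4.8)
The plan is to prove the cyclic chain of implications (i) $\Rightarrow$ (ii) $\Rightarrow$ (iii) $\Rightarrow$ (i), using Theorem \ref{t3} to pass between the complex $\nu_{\le 0}G[0]$ and $R$-equivalence in degree $0$, and Corollary \ref{c4} b) (hence Theorem \ref{t4}) for the structural input. Throughout, the rational map $f$ induces, by evaluation at generic points, a family $f_K:G(K)/R\to G'(K)/R$ natural with respect to the functoriality of Corollary \ref{c3}, so that Theorem \ref{t4} and Corollary \ref{c4} apply to it; this naturality, together with the definition of $f_*$ on $\nu_{\le 0}$, is the content of Proposition \ref{p4}.

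For (i) $\Rightarrow$ (ii): by Theorem \ref{t3} b) the degree-$0$ homology sheaf of $\nu_{\le 0}G[0]$ is the Nisnevich sheaf $G/R$ of Corollary \ref{c3}, with sections $G(K)/R$ at the generic point of any connected smooth $X$ with function field $K$. An isomorphism $f_*$ of complexes induces an isomorphism on $H_0$, i.e. an isomorphism of sheaves $G/R\iso G'/R$; evaluating at generic points (every function field $K/k$ arises as such) yields the bijectivity of $f_K$. For (ii) $\Rightarrow$ (iii): this is exactly Corollary \ref{c4} b). Bijectivity of all the $f_K$ produces extensions $\tilde G,\tilde G'$ of $G,G'$ by invertible tori together with an isomorphism $\tilde G\iso\tilde G'$ inducing $(f_K)$; the discrepancy between $f$, as a rational map, and the homomorphism underlying this isomorphism is absorbed into a translation, as recorded in Lemma \ref{l11}. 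This is precisely condition (iii).

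For (iii) $\Rightarrow$ (i): after the extensions and the translation, $f$ is induced by an honest isomorphism $\tilde f:\tilde G\iso\tilde G'$, which gives an isomorphism $\nu_{\le 0}\tilde f[0]$. It remains to identify the birational complexes of the extended objects with those of the originals. An invertible torus $P$ has $\nu_{\le 0}P[0]=0$: its cocharacter lattice is its own coflasque resolution, so all the higher tori $S_n$ of Theorem \ref{t3} vanish, while $P(K)/R=0$ for every $K$ since invertible tori are $R$-trivial; hence every homology sheaf of $\nu_{\le 0}P[0]$ vanishes. Applying $\nu_{\le 0}$ to the triangle attached to the extension $0\to P\to\tilde G\to G\to 0$ then gives $\nu_{\le 0}\tilde G[0]\iso\nu_{\le 0}G[0]$, and likewise for $G'$. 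As translations act trivially on $\nu_{\le 0}G'[0]$ (Lemma \ref{l11}, Proposition \ref{p4}), we conclude that $f_*$ is an isomorphism.

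The main obstacle is the passage underlying (ii) $\Rightarrow$ (i): condition (ii) only controls the degree-$0$ homology $G/R$, whereas (i) asserts an isomorphism of the \emph{entire} complex, including the higher homology sheaves $S_n/R$. There is no direct way to propagate degree-$0$ bijectivity upward, and the mechanism that makes it work is the structural Theorem \ref{t4}, which upgrades a compatible family of bijections on $R$-equivalence to a genuine isomorphism of group schemes, up to invertible tori. The decisive point is that invertible tori are exactly the semi-abelian varieties invisible to $\nu_{\le 0}$, which is what allows the cycle of implications to close.
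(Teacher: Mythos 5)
Your proposal is correct and takes essentially the same route the paper intends: the corollary is stated with a \qed precisely because it assembles Theorem \ref{t3}~b) for (i)$\Rightarrow$(ii), Corollary \ref{c4}~b) (via the naturality supplied by Proposition \ref{p4}) for (ii)$\Rightarrow$(iii), and the invariance of $\nu_{\le 0}$ under extensions by invertible tori together with the remark on translations for (iii)$\Rightarrow$(i), exactly as you do. Your only deviation is re-deriving $\nu_{\le 0}P[0]=0$ for an invertible torus $P$ by computing all homology sheaves via Theorem \ref{t3}, where the paper would simply cite Lemmas \ref{l3} and \ref{l4}; this is an immaterial difference.
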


\subsection*{Acknowledgements} Part of Theorem \ref{t3} was obtained in the
course of discussions with Takao Yamazaki during his stay at the IMJ in October 2010: I would
like to thank him for inspiring exchanges. I also thank Daniel Bertrand for a helpful
discussion. Finally, I wish to acknowledge inspiration from the work of Colliot-Th\'el\`ene and
Sansuc, which will be obvious throughout this paper.

\section{Proofs of Theorem \ref{t3} and Corollary \ref{c2}}\label{s2}

\begin{lemma}\label{l1} The exact sequence
\[0\to T(k)\to G(k)\to A(k)\]
induces an exact sequence
\[0\to T(k)/R\by{i} G(k)/R\to A(k).\]
\end{lemma}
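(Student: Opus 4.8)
The plan is to combine the functoriality of $R$-equivalence with the rigidity of abelian varieties. Since $R$-equivalence is covariant for morphisms of $k$-varieties, the inclusion $T\inj G$ and the projection $\pi\colon G\to A$ induce maps on $R$-equivalence classes; and as $A$ is an abelian variety it carries no rational curves, so every rational map $\P^1\tto A$ is constant and $R$-equivalence on $A(k)$ is trivial, i.e.\ $A(k)/R=A(k)$. This already provides the maps $i\colon T(k)/R\to G(k)/R$ and $G(k)/R\to A(k)$ of the statement. The three varieties being commutative algebraic groups, these maps are homomorphisms and the class of the neutral element $e$ is a subgroup, so questions about kernels reduce to statements about the neutral class.

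Exactness at $G(k)/R$ is then formal from the given left-exact sequence. The image of $i$ clearly lies in the kernel. Conversely, if $[g]\in G(k)/R$ maps to $e$ in $A(k)$ then $\pi(g)=e$, so $g\in T(k)$ by exactness of $0\to T(k)\to G(k)\to A(k)$ at $G(k)$, whence $[g]=i([g])$ lies in the image of $i$.

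The substance is the injectivity of $i$, i.e.\ the assertion that an element $t\in T(k)$ which is $R$-equivalent to $e$ inside $G$ is already $R$-equivalent to $e$ inside $T$. I would realise $t\sim_R e$ in $G$ by a chain $t=g_0,g_1,\dots,g_n=e$ in $G(k)$ together with rational maps $\phi_j\colon\P^1\tto G$, each defined at $0$ and $\infty$, with $\phi_j(0)=g_{j-1}$ and $\phi_j(\infty)=g_j$. Each composite $\pi\circ\phi_j\colon\P^1\tto A$ extends, by properness of $A$, to a morphism $\P^1\to A$, which is constant by rigidity; hence $\pi(g_{j-1})=\pi(g_j)$ for every $j$, and since $\pi(g_0)=\pi(t)=e$ we obtain $\pi(g_j)=e$ for all $j$, so that every $g_j$ lies in $T(k)=\Ker\pi$. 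The same computation shows that on its locus of definition each $\phi_j$ takes values in the fibre $\pi^{-1}(e)=T$, so that $\phi_j$ is in fact a rational map $\P^1\tto T$ linking $g_{j-1}$ to $g_j$. Concatenating these links exhibits $t\sim_R e$ in $T$; thus the kernel of $i$ is trivial and $i$ is injective.

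The main obstacle is precisely this last step: a priori neither the intermediate points nor the connecting curves of an $R$-chain in $G$ need remain in $T$. The projection to $A$, together with the absence of rational curves on an abelian variety, is exactly what confines the entire chain to the fibre $T$; this is the one place where the hypothesis that $G$ is an extension of $A$ by $T$ (rather than an arbitrary $k$-group) enters.
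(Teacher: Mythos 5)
Your proof is correct and follows essentially the same route as the paper: the whole argument rests on the observation that a rational map $\P^1\tto G$ composed with $G\to A$ is constant (rigidity of abelian varieties), so each link of an $R$-equivalence chain lies in a single $T$-coset, which forces a chain with endpoints in $T$ to stay in $T$ and gives the injectivity of $i$. The paper's proof is just a compressed version of this, leaving the formal exactness checks (which you spell out) as ``clear.''
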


\begin{proof} Let $f:\P^1\tto G$ be a $k$-rational map defined at $0$ and $1$. Its composition
with the projection $G\to A$ is constant: thus the image of $f$ lies in a $T$-coset of $G$
defined by a rational point. This implies the injectivity of $i$, and the rest is clear.
\end{proof}

Let $\NST$ denote the category of Nisnevich sheaves with transfers. Recall that
$\DM_-^\eff$ may be viewed as a localisation of $D^-(\NST)$, and that its tensor structure is a
descent of the tensor structure on the latter category \cite[Prop.
3.2.3]{voetri}. 

\begin{lemma}\label{l3} If $G$ is an invertible torus, there is a canonical isomorphism in
$D^-(\NST)$
 \[L[0]\otimes \G_m\iso G[0].\]
In particular, $\nu_{\le 0} G[0]=0$.
\end{lemma}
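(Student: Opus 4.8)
The plan is to reduce to the case of a Weil restriction of scalars and then to compute directly. Since $G$ is invertible, its cocharacter lattice $L$ is a direct summand of a permutation lattice $P$; writing $P=\bigoplus_i \Z[\Gamma/\Gamma_{E_i}]$ for suitable finite separable extensions $E_i/k$, the torus with cocharacter lattice $P$ is the quasi-split torus $\prod_i R_{E_i/k}\G_m$, and $G$ is one of its direct factors. Both sides of the asserted isomorphism, as well as the canonical morphism \eqref{eq2bis}, are additive in $L$ (equivalently in $G$) and natural; so a direct sum of such maps is an isomorphism if and only if each summand is. It therefore suffices to treat $G=R_{E/k}\G_m$ with $L=\Z[\Gamma/\Gamma_E]$.

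For such $G$, I would first identify $L[0]$ with the representable sheaf with transfers $\Z_{\tr}(\Spec E)$. Indeed, for $U$ smooth connected, $\Z_{\tr}(\Spec E)(U)=c(U,\Spec E)=\Z[\pi_0(U_E)]$ since $U_E\to U$ is finite \'etale; this is exactly the value of the homotopy invariant Nisnevich sheaf attached to the permutation module $\Z[\Gamma/\Gamma_E]$, namely $L$. The core of the proof is then the projection formula for the finite \'etale morphism $f\colon\Spec E\to\Spec k$: at the level of Nisnevich sheaves with transfers one has $\Z_{\tr}(\Spec E)\otimes\sF\cong f_\sharp f^*\sF$, where $f_\sharp$ is the left adjoint to the restriction $f^*$. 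Taking $\sF=\G_m$ gives $f_\sharp\,\G_m$ over $E$, whose value on $U$ is $\mathcal{O}^*(U_E)=(R_{E/k}\G_m)(U)$, that is, $G$. Because $\Z_{\tr}(\Spec E)$ is representable, tensoring with it is exact, so no derivation is needed and $L[0]\otimes\G_m$ is concentrated in degree $0$; this yields an isomorphism $L[0]\otimes\G_m\iso G[0]$ in $D^-(\NST)$.

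To see that this isomorphism is the \emph{canonical} map \eqref{eq2bis}, I would use the adjunction $\Hom(L[0]\otimes\G_m,G[0])\cong\Hom(L[0],\uHom(\G_m[0],G[0]))=\Hom_\HI(L,G_{-1})$, together with the identification $G_{-1}\iso L$ of Proposition \ref{p2} (reproved here by the formula $\uHom(\G_m[0],G[0])\simeq G_{-1}[0]$). Under this bijection, \eqref{eq2bis} is by construction the adjoint of $\mathrm{id}_L$. By naturality of the projection formula, the isomorphism produced above induces the identity on $(-)_{-1}=\uHom(\G_m,-)$, hence also corresponds to $\mathrm{id}_L$; so the two maps agree and \eqref{eq2bis} is an isomorphism. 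Since $\nu_{\le 0}G[0]$ is by definition its cone, the ``in particular'' clause $\nu_{\le 0}G[0]=0$ follows at once.

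The main obstacle is the core computation of the second paragraph: making the projection formula $\Z_{\tr}(\Spec E)\otimes\G_m\cong R_{E/k}\G_m$ precise at the level of $\NST$ rather than only in $\DM_-^\eff$, checking that the tensor product is genuinely underived, and—crucially—verifying that the resulting sheaf isomorphism is the canonical morphism \eqref{eq2bis} and not merely an abstract one. The reduction and the final deduction are formal by comparison.
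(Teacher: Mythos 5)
Your proof is correct and follows essentially the same route as the paper's: reduce by additivity to $G=R_{E/k}\G_m$, identify $L$ with $\Z_\tr(\Spec E)=f_*\Z$ for $f\colon\Spec E\to\Spec k$, and compute $\Z_\tr(\Spec E)\otimes\G_m\cong f_*f^*\G_m=T$ via the adjunction (projection formula), exactness of $f_*f^*$ for $f$ finite \'etale guaranteeing that the tensor product is underived. Your extra check that the resulting isomorphism agrees with the canonical map \eqref{eq2bis} --- which is what the conclusion $\nu_{\le 0}G[0]=0$ actually requires --- is left implicit in the paper, so it is a welcome refinement rather than a divergence.
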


\begin{proof} We reduce to the case $T=R_{E/k}\G_m$, where $E$ is a finite 
extension of $k$. Let us write more precisely $\NST(k)$ and $\NST(E)$. There is a pair of adjoint
functors
\[\NST(k)\by{f^*} \NST(E),\quad \NST(E)\by{f_*} \HI(k)\]
where $f:\Spec E\to \Spec k$ is the projection. Clearly,
\[f_*\Z = \Z_\tr(\Spec E),\quad f_*\G_m = T\]
where $\Z_\tr(\Spec E)$ is the Nisnevich sheaf with transfers represented
by $\Spec E$. Since $\Z_\tr(\Spec E) = L$, this proves the claim.
\end{proof}

\begin{proof}[Proof of Theorem \ref{t3}] a) Recall that $L_0$ is an invertible lattice chosen so that $L_0(E)\to L(E)$ is surjective for any extension $E/k$. In particular, \eqref{eq3} and \eqref{eq5} are
exact as sequences of  Nisnevich sheaves; hence
$L[0]$ is isomorphic in $D^-(\NST)$ to the complex
\[L_\cdot = \dots\to L_n\to \dots\to L_0\to 0.\]

(We may view \eqref{eq5} as a version of Voevodsky's ``canonical resolutions" as in \cite[\S 3.2
p. 206]{voetri}.)

By Lemma \ref{l3}, $L_n[0]\otimes \G_m[0]\simeq T_n[0]$ is homologically
concentrated in degree $0$ for all $n$. It follows that the complex
\[T_\cdot = \dots\to T_n\to \dots\to T_0\to 0\]
is isomorphic to $L[0]\otimes \G_m[0]$ in $D^-(\NST)$, hence \emph{a fortiori} in $\DM_-^\eff$.

b) For any nonempty open subscheme $U\subseteq
X$ we have isomorphisms
\begin{equation}\label{eq6}
H_n(\nu_{\le 0}G[0])(X)\iso H_n(\nu_{\le 0}G[0])(U)\iso H_n(\nu_{\le
0}G[0])(K)
\end{equation} 
(e.g. \cite[p. 912]{motiftate}). By a), the right hand term is the $n$-th homology group of the
complex
\[\dots\to T_n(K)\to\dots\to T_0(K)\to G(K)\to 0\]
with $G(K)$ in degree $0$. By \cite[p. 199, Th. 2]{cts}, the sequences 
\begin{gather*}
0\to S_1(K)\to T_0(K)\to T(K) \to T(K)/R \to 0\\
0\to S_{n+1}(K)\to T_n(K)\to S_n(K) \to S_n(K)/R \to 0
\end{gather*}
are all exact. Using Lemma \ref{l1} for $H_0$, the conclusion follows from an easy diagram
chase.
\end{proof}

\begin{rk} As a corollary to Theorem \ref{t3}, $S_n(K)/R$ only depends on $G$. This can be
seen without mentioning $\DM_-^\eff$: in view of the reasoning just above, it
suffices to construct a homotopy equivalence between two resolutions of the form \eqref{eq5},
which easily follows from the definition of coflasque modules.
\end{rk}

\begin{proof}[Proof of Corollary \ref{c2}] a) This follows via Theorem \ref{t3} and Lemma
\ref{l1} from \cite[p. 200, Cor. 2]{cts} and the Mordell-Weil-N\'eron theorem. b) We may choose
the $L_n$, hence the $S_n$ split by $E/k$. The conclusion now follows from Theorem \ref{t3}
and \cite[p. 200, Cor. 3]{cts}. The last claim is clear.
\end{proof}

\begin{rk} In characteristic $p>0$, all finitely generated perfect fields are finite. To give
some contents to Corollary \ref{c2} a) in this characteristic, one may pass to the perfect
[one should say radicial] closure $k$ of a finitely generated field $k_0$. If $G$ is a
semi-abelian
$k$-variety, it is defined over some finite extension $k_1$ of $k_0$. If $k_2/k_1$ is a finite
(purely inseparable) subextension of $k/k_1$, then the composition
\[G(k_2)\by{N_{k_2/k_1}} G(k_1)\to G(k_2)\]
equals multiplication by $[k_2:k_1]$. Hence Corollary \ref{c2} a) remains true at least after inverting $p$.
\end{rk}

\section{Stable birationality}\label{s4}

If  $X$ is a smooth variety over a field $k$, we write $\Alb(X)$ for its
generalised Albanese variety in the sense of Serre \cite{serrealb}: it is a semi-abelian
variety, and a rational point $x_0\in X$ determines a morphism $X\to \Alb(X)$ which is universal
for morphisms from $X$ to semi-abelian varieties sending $x_0$ to $0$.

We also write $\NS(X)$ for the group of cycles of codimension $1$ on $X$ modulo algebraic
equivalence. This group is finitely generated if $k$ is algebraically closed \cite[Th. 3]{picfini}.

\subsection{Well-known lemmas} I include proofs for lack of reference.

\begin{lemma}\label{l6} a) Let $G,G'$ be two semi-abelian $k$-varieties. Then any $k$-morphism
$f:G\to G'$ can be written uniquely $f = f(0)+f'$, where $f'$ is a homomorphism.\\
b) For any semi-abelian $k$-variety $G$, the canonical map $G\to \Alb(G)$ sending $0$ to $0$ is
an isomorphism.
\end{lemma}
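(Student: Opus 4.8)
The plan is to prove both parts by reducing to the universal property of the generalised Albanese variety and to the rigidity of homomorphisms of semi-abelian varieties. For part a), the idea is that the decomposition $f = f(0) + f'$ should be forced: if such a decomposition exists, then $f'(x) = f(x) - f(0)$, so $f'$ is determined by $f$, giving uniqueness immediately; the content is that the map $f'$ so defined is actually a \emph{homomorphism}. First I would set $f'(x) := f(x) - f(0)$, which is a $k$-morphism $G \to G'$ sending $0$ to $0$ (here I use that $G'$ is a commutative group variety so that subtraction of a constant is a morphism). It then remains to show that any morphism of semi-abelian varieties carrying $0$ to $0$ is a homomorphism. This is the standard rigidity statement, which I expect to be the main obstacle to write cleanly.

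For that rigidity step, the approach I would take is to invoke the universal property of $\Alb$ directly: by definition of $\Alb(G)$, the morphism $f' : G \to G'$ (sending $0$ to $0$) factors uniquely through the universal map $G \to \Alb(G)$ by a homomorphism $\Alb(G) \to G'$. If part b) is granted — that $G \to \Alb(G)$ is an isomorphism — then $f'$ itself is exhibited as a homomorphism composed with an isomorphism, hence is a homomorphism. So I would prove part b) first and deduce part a) from it, rather than the reverse.

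To prove part b), I would check that the identity map $G \to G$ satisfies the universal property that characterises $G \to \Alb(G)$. Concretely, $\Alb(G)$ is the target of the universal morphism among all morphisms from $G$ to semi-abelian varieties sending the chosen base point $0$ to $0$. Given any semi-abelian variety $H$ and any morphism $\phi : G \to H$ with $\phi(0) = 0$, I must produce a \emph{unique} homomorphism $\psi : G \to H$ with $\psi = \phi$; but since $G$ is itself semi-abelian, the candidate is $\phi$ itself once we know $\phi$ is a homomorphism — which is exactly the rigidity we are trying to establish. To avoid circularity I would instead argue the rigidity intrinsically: a morphism $\phi : G \to H$ of semi-abelian varieties with $\phi(0)=0$ is a homomorphism. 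The cleanest route is to consider $\Psi : G \times G \to H$, $\Psi(x,y) = \phi(x+y) - \phi(x) - \phi(y)$, and show $\Psi \equiv 0$. One restricts to the abelian and toric parts and uses that $\Psi(0,y) = \Psi(x,0) = 0$; on the abelian quotient this is the classical theorem of the cube / rigidity for abelian varieties, and on the toric part one uses that $\Hom$ of tori is discrete so any morphism vanishing at the origin on each factor is bilinear-free, forcing $\Psi=0$. Feeding $\Psi \equiv 0$ back shows $\phi$ is a homomorphism, hence $G$ already satisfies the universal property of $\Alb(G)$ with the identity as the universal morphism, and by uniqueness of universal objects $G \to \Alb(G)$ is an isomorphism.

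The hard part will be making the rigidity argument $\Psi \equiv 0$ fully rigorous across the semi-abelian extension $0 \to T \to G \to A \to 0$ rather than just for abelian varieties or tori separately: one must handle the mixed terms and ensure the base point and the connectedness of $G$ are used correctly, and over a general perfect field one should be careful that ``morphism'' and ``homomorphism'' are tested after base change when needed. Once rigidity is in hand, parts a) and b) both follow with only bookkeeping, so I would concentrate the effort there and treat the deductions as routine.
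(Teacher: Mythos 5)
Your architecture is sound and, at its core, matches the paper's. The paper notes explicitly that a) and b) are equivalent ``by an adjunction game,'' so your plan of proving the rigidity statement once and deducing b) from it via the universal property of $\Alb$ is legitimate, and you correctly spotted and escaped the circularity of trying to deduce a) from b) first. (The paper, for its part, also gives an \emph{independent} direct proof of b): it shows the abelian part of $\Alb(G)$ is $A$, that $T$ is a direct summand of $T'=\Ker(\Alb(G)\to A)$, and then gets $\dim T'=\dim T$ by a dimension count on the line bundle completion $\bar G\to A$, citing \cite[Cor.\ 10.5.1]{bvk}; on your route this computation is not needed.) Your uniqueness argument and the reduction of a) to ``$f(0)=0$ implies $f$ is a homomorphism'' are fine, and your concern about base change is handled in the paper by reducing at the outset to $k$ a universal domain: being a homomorphism is an identity of two morphisms $G\times G\to G'$, so it may be checked after a faithfully flat extension of scalars.

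The one genuine gap is exactly the step you flag but do not execute: the mixed terms. As written, ``restricting $\Psi$ to the abelian and toric parts'' does not parse -- $\Psi\colon G\times G\to G'$ induces nothing on $A\times A$ until one knows it is invariant under translation by $T\times T$, and that invariance is the whole content; likewise ``bilinear-free'' is not an argument. The paper's mechanism is the missing idea. For $g\in G(k)$ set $\phi_g(t)=f(g+t)-f(g)$; since every morphism from the torus $T$ to the abelian variety $A'$ is constant, $\phi_g$ lands in $T'$ and, by the (easy, monomial) torus case, is a homomorphism $T\to T'$. Because each $\phi_g$ is additive in $t$, one checks $\phi_{g+t'}=\phi_g$ for $t'\in T$, so $g\mapsto\phi_g$ descends to a morphism $A\to\uHom(T,T')$; the target is a discrete (\'etale) scheme and $A$ is connected, so this is constant with value $\phi_0=f|_T$. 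Hence $f(g+t)=f(g)+f(t)$ for all $g,t$, which is precisely the statement that $\Psi$ is bi-invariant under $T$ and descends to $A\times A\to G'$. Its projection to $A'$ vanishes by the classical rigidity theorem for abelian varieties, so it is a morphism from the complete variety $A\times A$ to the affine group $T'$, hence constant, equal to $\Psi(0,0)=0$. With this dévissage inserted at the point you left open, your plan becomes the paper's proof of a), and your deduction of b) then goes through as you describe.
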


\begin{proof} a) amounts to showing that if $f(0)=0$, then
$f$ is a homomorphism.  By an adjunction game, this is equivalent to b). Let us give two
proofs: one of a) and one of b).

\emph{Proof of a)}. We may assume
$k$ to be a universal domain. The staement is classical for abelian varieties
\cite[p. 41, Cor. 1]{mumford} and an easy computation for tori. In the general case, let $T,T'$
be the toric parts of $G$ and $G'$ and $A,A'$ be their abelian parts. Let
$g\in G(k)$. As any morphism from $T$
to $A'$ is constant, the $k$-morphism
\[\phi_g:T\ni t\mapsto f(g+t)-f(g)\in G'\]
(which sends $0$ to $0$) lands in $T'$, hence is a homomorphism. Therefore it only depends on the
image of $g$ in $A(k)$. This defines a morphism
$\phi:A\to
\uHom(T,T')$, which must be constant with value $\phi_0=f$. It follows that
\[(g,h)\mapsto f(g+h)-f(g)-f(h)\]
induces a morphism $A\times A\to T'$. Such a morphism is constant, of value $0$.

\emph{Proof of b)}.  This is true if $G$ is abelian, by rigidity and the equivalence between a) and b).
In general, any morphism from $G$ to an abelian variety is trivial on $T$. This shows that the
abelian part of $\Alb(G)$ is $A$. Let $T' = \Ker(\Alb(G)\to A)$. We also have the counit
morphism $\Alb(G)\to G$, and the composition $G\to \Alb(G)\to G$ is the identity. Thus $T$ is a
direct summand of $T'$. It suffices to show that $\dim T' = \dim T$. Going to the algebraic
closure, we may reduce to $T = \G_m$.

Then consider the line bundle completion $\bar G\to A$ of the $\G_m$-bundle $G\to A$. It is
sufficient to show that the kernel of
\[\Alb(G)\to \Alb(\bar G) = A\]
is $1$-dimensional. This follows for example from \cite[Cor. 10.5.1]{bvk}.
\end{proof}

\begin{lemma}\label{l9} Suppose $k$ algebraically closed, and let $G$ be a semi-abelian $k$-variety. Let
$A$ be the abelian quotient of $G$. Then the map
\begin{equation}\label{eq7}
\NS(A)\to \NS(G)
\end{equation}
is an isomorphism.
\end{lemma}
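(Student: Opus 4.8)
The plan is to reduce to the case of a one-dimensional torus and then analyse the associated $\G_m$-torsor $G\to A$ by completing it to a line bundle. Since $k$ is algebraically closed I would first split $T\iso \G_m^r$ and argue by induction on $r$: quotienting $G$ by a single factor $\G_m\subset T$ produces a semi-abelian variety $G''$, an extension of $A$ by $\G_m^{r-1}$, with $G\to G''$ a $\G_m$-torsor. By the inductive hypothesis $\NS(A)\iso \NS(G'')$, so it suffices to treat one $\G_m$ at a time (equivalently, one realises $G\to A$ as an iterated $\G_m$-torsor built from $r$ line bundles on $A$).

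The crucial input in the one-dimensional case is that a semi-abelian extension $0\to \G_m\to G\to A\to 0$ is classified, via the Barsotti--Weil formula, by a class in $\Ext^1(A,\G_m)=\Pic^0(A)$. Hence the underlying $\G_m$-torsor $G\to A$ is the complement of the zero section in the total space $V$ of a line bundle $\mathcal{M}$ with $[\mathcal{M}]=0$ in $\NS(A)$. I would then work with $V$ and exploit two standard facts: the projection $\pi\colon V\to A$ induces an isomorphism $\pi^*\colon \Pic(A)\iso \Pic(V)$ (Picard and Néron--Severi groups are insensitive to line-bundle total spaces, so also $\NS(A)\iso \NS(V)$), and the localisation sequence for the smooth divisor $Z$ (the zero section $\cong A$) yields a surjection $\Pic(V)\surj \Pic(G)$ whose kernel is generated by $[\mathcal{O}_V(Z)]$. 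Restricting along the zero section $s$ gives $s^*\mathcal{O}_V(Z)=\mathcal{M}$, so $[\mathcal{O}_V(Z)]=\pi^*[\mathcal{M}]$ is algebraically trivial on $V$.

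Passing to Néron--Severi groups, $\Pic(V)\surj \Pic(G)$ induces a surjection $\NS(V)\surj \NS(G)$, and since the kernel of the former is generated by the algebraically trivial class $[\mathcal{O}_V(Z)]$, the map $\NS(V)\to \NS(G)$ will be an isomorphism once one knows that algebraic equivalence descends along $G\to A$. Combined with $\NS(A)\iso \NS(V)$ this gives the assertion.

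The main obstacle is exactly this injectivity of $\NS(V)\to \NS(G)$: one must show that a class $\mathcal{N}$ with $\mathcal{N}|_G\in \Pic^0(G)$ already lies in $\Pic^0(V)$, i.e. that $\Pic^0(G)$ is the image of $\Pic^0(A)$. This is where the hypothesis $[\mathcal{M}]=0$ is genuinely used: for $\mathcal{M}\notin \Pic^0$, for instance $A=\P^1$ and $\mathcal{M}=\mathcal{O}(1)$, the conclusion already fails on Picard groups. I would establish the descent by using the group structure of $G$ (being semi-abelian, translation by the connected group $G$ characterises $\Pic^0$), or alternatively by connecting $\mathcal{M}$ to $\mathcal{O}_A$ through a connected family inside $\Pic^0(A)$ and invoking continuity of the Néron--Severi group in the resulting family of torsors. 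This descent step is the only non-formal point; the remainder is bookkeeping through the induction.
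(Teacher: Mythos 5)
Your proposal is correct and takes essentially the same route as the paper: complete the $\G_m$-torsor(s) $G\to A$ to line bundle total spaces, use $\Pic(A)\iso\Pic(V)$ plus the localisation surjection $\Pic(V)\surj\Pic(G)$ with kernel generated by the zero-section class(es), and identify those classes as algebraically trivial via Barsotti--Weil $\Ext^1(A,\G_m)=\Pic^0(A)$ --- the paper simply completes all $\G_m$-factors at once into a product of line bundles rather than inducting on the rank of $T$. The ``descent'' step you single out as the only non-formal point is precisely the standard exact sequence $\Z[Z]\to\NS(V)\to\NS(G)\to 0$ for an open subvariety of a smooth variety (extend a connecting family of line bundles from $C\times G$ to $C\times V$, which is possible since the complement is a divisor, and compare the two endpoints modulo $[Z]$), a fact the paper also invokes without proof, so your heavier alternatives (translation-invariance characterising $\Pic^0$ of a semi-abelian variety, or continuity of $\NS$ in a family of torsors) are not needed.
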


\begin{proof} Let $T=\Ker(G\to A)$ and $X(T)$ be its character group. Choosing a basis $(e_i)$ of
$X(T)$, we may complete the
$\G_m^n$-torsor $G$ into a product of line bundles $\bar G\to A$. The surjection 
\[\Pic(A)\iso \Pic(\bar G)\surj \Pic(G)\]
show the surjectivity of \eqref{eq7}. Its kernel is generated by the classes of the irreducible
components $D_i$ of the divisor with normal crossings $\bar G-G$. These components correspond to
the basis elements $e_i$. Since the corresponding $\G_m$-bundle is a group extension of $A$ by
$\G_m$, the class of the $0$ section of its line bundle completion lies in $\Pic^0(A)$, hence
goes to $0$ in $\NS(\bar G)$.
\end{proof}

\begin{lemma}\label{l5} Let $X$ be a smooth $k$-variety, and let
$U\subseteq X$ be a dense open subset. Then there is an exact sequence of semi-abelian varieties
\[0\to T\to \Alb(U)\to \Alb(X)\to 0\]
with $T$ a torus. If $\NS(\bar U)=0$ (this happens if $U$ is small enough), there is an exact
sequence of character groups
\[0\to X(T)\to \bigoplus_{x\in X^{(1)}-U^{(1)}} \Z\to \NS(\bar X)\to 0.\]
\end{lemma}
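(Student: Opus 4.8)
The plan is to prove the two assertions in turn, extracting the torus structure of $T$ from the character-group computation rather than establishing it first.

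For the exact sequence of semi-abelian varieties, I would start from the functoriality of Serre's construction: the open immersion $U\inj X$ induces a homomorphism $\alpha:\Alb(U)\to \Alb(X)$ (after fixing a base point $x_0\in U$, or by descent in general). Surjectivity of $\alpha$ follows because $\Alb(X)$ is generated as an algebraic group by the image of $X$, hence by the image of the dense open $U$. For the kernel I would invoke the fact that a rational map from a smooth variety to an abelian variety is everywhere defined: thus every morphism $U\to B$ to an abelian variety extends to $X$, so $\alpha$ is an isomorphism on abelian quotients. Consequently $\Ker\alpha$ maps to $0$ in the abelian quotient of $\Alb(U)$, hence is contained in the toric part of $\Alb(U)$ and is of multiplicative type; that it is a genuine torus will come out of the character computation below, via the freeness of $X(T)$.

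Next I would compute $X(T)$, working over $\bar k$ and tracking the Galois action throughout (all groups in sight are lattices). The goal is a canonical isomorphism of $X(T)$ with the group of divisors on $\bar X$ supported on the boundary $\bar X-\bar U$ that are algebraically equivalent to $0$. The mechanism is that pushing the extension $0\to T\to \Alb(\bar U)\to \Alb(\bar X)\to 0$ out along a character $\chi\in X(T)$ produces a $\G_m$-extension of $\Alb(\bar X)$, i.e.\ an element of $\Pic^0(\bar X)$ via the Albanese map; conversely a boundary divisor $D$ with $[D]\in\Pic^0(\bar X)$ becomes trivial on $\bar U$, so the corresponding extension splits over $\bar U$ and the universal property of $\Alb(\bar U)$ returns a character of $T$. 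This is precisely the generalized-Jacobian/$1$-motive description of the toric part of the generalized Albanese recorded in \cite{bvk}. It yields a canonical injection $X(T)\inj \bigoplus_{x\in X^{(1)}-U^{(1)}}\Z$ whose image is $\Ker(\bigoplus_x\Z\to \NS(\bar X))$, the map sending each boundary prime divisor to its Néron--Severi class. Since $X(T)$ is thereby realised as a subgroup of a free abelian group it is free, which shows $T$ is a torus and finishes the first part.

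To finish the second part I would use the classical divisor-class sequence for the smooth variety $\bar X$ and its dense open $\bar U$, namely $\bigoplus_{x\in X^{(1)}-U^{(1)}}\Z\to \Pic(\bar X)\to \Pic(\bar U)\to 0$; quotienting by algebraic equivalence preserves right exactness and gives $\bigoplus_x\Z\to \NS(\bar X)\to \NS(\bar U)\to 0$. Under the hypothesis $\NS(\bar U)=0$ — attainable by shrinking $U$, since $\NS(\bar X)$ is finitely generated by \cite{picfini} — the map $\bigoplus_x\Z\to \NS(\bar X)$ is surjective, and combined with the identification of its kernel as $X(T)$ this produces the asserted exact sequence $0\to X(T)\to \bigoplus_x\Z\to \NS(\bar X)\to 0$. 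I expect the main obstacle to be the identification of $X(T)$ with the algebraically trivial boundary divisors: this is where the genuine content of Serre's generalized Albanese enters, as opposed to the formal universal-property manipulations of the first step and the standard localization and Néron--Severi arguments of the last.
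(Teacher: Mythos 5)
Your proposal is correct and is essentially a fleshed-out version of the paper's own proof, which consists of the single citation \cite[Cor. 10.5.1]{bvk}: the crux of your argument --- the identification of $X(T)$ with the kernel of $\bigoplus_{x\in X^{(1)}-U^{(1)}}\Z\to \NS(\bar X)$ via the $1$-motive/generalized-Jacobian description of the toric part of the Albanese --- is exactly what that corollary supplies, and you correctly flag it as the only step with real content. The surrounding arguments you make explicit (surjectivity of $\Alb(U)\to\Alb(X)$, the isomorphism on abelian quotients via Weil's extension theorem, freeness of $X(T)$ forcing $T$ to be a torus, and the localization sequence for $\NS$ under the hypothesis $\NS(\bar U)=0$) are all sound and are the routine deductions the paper leaves implicit.
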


\begin{proof} This follows for example from \cite[Cor. 10.5.1]{bvk}.
\end{proof}

\begin{lemma}\label{l11} Let $f:G\tto G'$ be a rational map
between semi-abelian $k$-varieties, with
$G$ a torus.  Then there exists an extension $\tilde G$ of
$G$ by a permutation torus and a homomorphism
$\tilde f:\tilde G\to G'$ which extends $f$ up to translation in the following sense: there exists
a rational section $s:G\tto \tilde G$ of the projection
$\pi:\tilde G\to G$ and a rational point $g'\in G'(k)$ such that $f=\tilde f s+ g'$. If $f$ is
defined at $0_G$ and sends it to $0_{G'}$, then $g'=0$.
\end{lemma}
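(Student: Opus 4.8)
The plan is to realise the required extension $\tilde G$ as the generalised Albanese of the domain of definition of $f$, and to extract the homomorphism $\tilde f$ from the universal property of the Albanese, the translation term $g'$ arising solely from the choice of base point. Let $U\subseteq G$ be the open set on which $f$ is defined; by Lemma \ref{l5} I may shrink $U$ so that $\NS(\bar U)=0$. After translating the source by a rational point of $U$, I would reduce to the situation where $f$ is defined at $0_G$; the induced shift is exactly what will be recorded by $g'$, so that in the case $f(0_G)=0_{G'}$ one obtains $g'=0$. I then take $0_G$ as base point and set $\tilde G:=\Alb(U)$.

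Next I would use the universal property of $\Alb(U)$. Writing $G'=\Alb(G')$ via Lemma \ref{l6}b), the morphism $f\colon U\to G'$ factors through the Albanese map $a_U\colon U\to \tilde G$ as $\tilde f\circ a_U=f$ for a unique morphism $\tilde f\colon \tilde G\to G'$. Since $\Alb(G)=G$ (again Lemma \ref{l6}b)), functoriality of the Albanese identifies the projection $\pi\colon \tilde G\to G$ with $\Alb$ applied to $U\hookrightarrow G$, whence $\pi\circ a_U$ is the inclusion $U\hookrightarrow G$; thus $a_U$, regarded as a rational map $s\colon G\tto \tilde G$, is a rational section of $\pi$ with $\tilde f\circ s=f$. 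Decomposing $\tilde f=\tilde f(0)+(\tilde f)'$ by Lemma \ref{l6}a) and absorbing the constant $\tilde f(0)$ into $g'$, I may take $\tilde f$ to be a homomorphism; when $f$ is defined at $0_G$ and sends it to $0_{G'}$ the constant vanishes and $g'=0$.

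It then remains to check that the torus $T$ in the extension $0\to T\to \tilde G\to G\to 0$ of Lemma \ref{l5} is a \emph{permutation} torus. The second exact sequence of that lemma reads $0\to X(T)\to \bigoplus_{x\in G^{(1)}-U^{(1)}}\Z\to \NS(\bar G)\to 0$. As $G$ is a torus its abelian quotient is trivial, so $\NS(\bar G)=0$ by Lemma \ref{l9}, and hence $X(T)\cong \bigoplus_{x\in G^{(1)}-U^{(1)}}\Z$. The indexing set consists of the codimension-one points of $G$ outside $U$, on which the Galois group acts by permuting the geometric irreducible components of $G-U$; therefore $X(T)$ is a permutation Galois module and $T$ a permutation torus, as required.

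The main obstacle I anticipate is not the geometry but the additive bookkeeping around base points: ensuring a suitable rational point of $U$ exists (so that $f$ may be normalised to be defined at $0_G$), matching the Albanese projection with $\pi$ and verifying that $s$ is genuinely a rational section, and tracking how the base-point shift produces precisely the constant $g'$. Once $\NS(\bar G)=0$ is invoked the identification of $T$ as a permutation torus is immediate, so the real care lies entirely in the translation-equivariance furnished by Lemma \ref{l6}a).
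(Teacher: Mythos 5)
Your skeleton is exactly the paper's: set $\tilde G=\Alb(U)$, obtain the extension $0\to P\to\tilde G\to G\to 0$ from Lemma \ref{l5} together with $\NS(\bar G)=0$, and get $\tilde f$ from the universal property of the Albanese plus Lemma \ref{l6}; your identification of $X(P)$ as a permutation module (via Lemma \ref{l9} and the vanishing of $\NS(\bar G)$ for a torus) is correct and matches the paper. However, the two items you set aside as ``additive bookkeeping'' are precisely where the paper's proof does real work, and as written your argument has a genuine hole at each. First, ``translating the source by a rational point of $U$'' presupposes $U(k)\neq\emptyset$. Over an infinite field this holds because a torus is unirational, so its rational points are Zariski-dense --- a justification you never supply, and the only place where the hypothesis that $G$ is a torus enters at this step. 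Over a finite field it can simply fail: a proper open subset of $G$ may contain no rational point at all. The paper handles this case by a different idea, which is missing from your proposal: $U$ always carries a zero-cycle of degree $1$, and such a zero-cycle suffices to define the Albanese map $U\to\Alb(U)$; one then lifts each closed point in its support to a closed point of $\tilde G$ with the same residue field. Without this, your proof does not cover finite $k$.

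Second, the base-point shift is not recorded by a constant alone, and this is where a nontrivial input (Hilbert 90) is required. If you normalise $f$ at a point $g\in U(k)$ with $g\neq 0_G$, then whichever way you arrange it the Albanese map composed with $\pi$ is $x\mapsto x-g$, not the identity; so the map you call $s$ is \emph{not} a section of $\pi$, and $f=\tilde f s+g'$ fails for any constant $g'$. To repair it you must set $s(x)=a_U(x)+\tilde g$ for a lift $\tilde g\in\tilde G(k)$ of $g$, and such a lift exists precisely because $P$ is a permutation torus, whence $H^1(k,P)=0$ and $\tilde G(k)\to G(k)$ is surjective. This is an explicit step in the paper's proof, producing $g'=f(g)-\tilde f(\tilde g)$. (Your treatment of the final clause is fine: when $f$ is defined at $0_G$ no translation is needed, $a_U$ itself is a section by Lemma \ref{l6} b), and $g'=0$.) With these two repairs --- density of rational points for infinite $k$, the degree-$1$ zero-cycle for finite $k$, and the Hilbert 90 lift --- your argument coincides with the paper's.
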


\begin{proof} Let $U$ be an open subset of $G$ where $f$ is defined. We define $\tilde
G=\Alb(U)$. Applying Lemmas \ref{l5} and \ref{l6} b) and using $\NS(\bar G)=0$,
we get an extension
\[0\to P\to \tilde G\to G\to 0\]
where $P$ is a permutation torus, as well as a morphism $\tilde f=\Alb(f):\tilde G\to G'$. 

Let us first assume $k$ infinite. Then $U(k)\neq \emptyset$ because $G$
is unirational. A rational point
$g\in U$ defines an Albanese map
$s:U\to
\tilde G$ sending $g$ to $0_{\tilde G}$. Since $P$ is a permutation torus, 
$g\in G(k)$ lifts to $\tilde g\in \tilde G(k)$ (Hilbert 90) and we may replace $s$ by a morphism
sending $g$ to
$\tilde g$. Then $s$ is a rational section of $\pi$. Moreover, $f=\tilde f s + g'$ with
$g'= f(g)-\tilde f(\tilde g)$. The last assertion follows.

If $k$ is finite, then $U$ has at least a zero-cycle $g$ of degree $1$, which is enough to define the Albanese map $s$. We then proceed as above (lift every closed point involved in $g$ to a closed point of $\tilde G$ with the same residue field).
\end{proof}

\begin{lemma} \label{l10} Let $G$ be a finite group, and let $A$ be a finitely generated
$G$-module. Then \\
a) There exists a short exact sequence of $G$-modules $0\to P\to F\to A\to 0$, with $F$
torsion-free and flasque, and $P$ permutation.\\
b) Let $B$ be another finitely generated $G$-module, and let $0\to P'\to E\to B\to 0$ be an
exact sequence with $P'$ an invertible module. Then any $G$-morphism $f:A\to B$ lifts to
$\tilde f:F\to E$.
\end{lemma}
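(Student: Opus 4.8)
The plan is to treat part a) by a pushout that glues a crude presentation of $A$ to a flasque resolution of the resulting kernel, and to treat part b) by identifying the obstruction to lifting with an $\Ext^1$ group that vanishes because $F$ is flasque and $P'$ is invertible.

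For a), first I would choose a surjection $P_0\surj A$ from a permutation lattice: pick $G$-module generators $a_i$ of $A$, let $H_i$ be the stabiliser of $a_i$, map $\Z[G/H_i]\to A$ by $[g]\mapsto g a_i$, and sum. Its kernel $N$ is a $\Z$-submodule of the torsion-free $P_0$, hence a lattice. Next I would invoke a flasque resolution $0\to N\to P_1\to S\to 0$ with $P_1$ \emph{permutation} and $S$ flasque (dualise the coflasque resolution of $N^\vee$ from \cite[p.~179]{cts}, using that permutation lattices are self-dual and that the dual of a coflasque lattice is flasque). Then I would form the pushout $F=(P_0\oplus P_1)/N$, with $N$ embedded antidiagonally. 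The two evident inclusions give short exact sequences $0\to P_1\to F\to A\to 0$ and $0\to P_0\to F\to S\to 0$. The first is the desired sequence, with $P:=P_1$ permutation; the second exhibits $F$ as an extension of the flasque lattice $S$ by the permutation (hence flasque) lattice $P_0$. Since flasque lattices are stable under extension (the Tate-cohomology sequence forces $\hat H^{-1}(H',F)=0$ for every $H'\le G$) and an extension of lattices is torsion-free, $F$ is flasque and torsion-free.

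For b), the lift I seek is a map $\tilde f:F\to E$ with $q'\tilde f=f q$, where $q:F\to A$ and $q':E\to B$ are the surjections. I would reduce this to lifting $h:=f q:F\to B$ through $q'$. Pulling back $0\to P'\to E\to B\to 0$ along $h$ yields an extension $0\to P'\to E\times_B F\to F\to 0$, and $\tilde f$ exists if and only if this extension splits, i.e. if and only if its class in $\Ext^1_G(F,P')$ vanishes. The crux is therefore $\Ext^1_G(F,P')=0$. This holds because $F$ is flasque and $P'$ invertible: for a permutation lattice one has $\Ext^1_G(F,\Z[G/H])\cong H^1(H,F^\vee)$ by Shapiro's lemma, and this vanishes for all $H$ precisely because $F^\vee$ is coflasque, which is the dual formulation of $F$ being flasque \cite{cts}; the general invertible $P'$ follows since it is a direct summand of a permutation lattice.

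I expect the only genuine input to be the existence in a) of a flasque resolution of $N$ with \emph{permutation}, not merely invertible, middle term. This is where one needs the explicit construction: surject onto $N^\vee$ from $\bigoplus_{(H',m)}\Z[G/H']$ indexed by invariants $m\in (N^\vee)^{H'}$, so that the map is surjective on $H'$-invariants for every $H'$; the long exact sequence then forces the kernel to be coflasque, and dualising gives the permutation-middle flasque resolution. Everything else — the pushout, the extension-closedness of flasque lattices, and the $\Ext^1$ computation — is routine once the flasque/coflasque duality of \cite{cts} is available.
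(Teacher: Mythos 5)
Your proof is correct and takes essentially the same route as the paper: for part b) the paper's entire argument is that the obstruction to lifting lies in $\Ext^1_G(F,P')$, which vanishes for $F$ flasque and $P'$ invertible by \cite[p.~182, Lemme 9]{cts}, and this is exactly your pullback/Shapiro argument with the citation's content written out. For part a) the paper simply cites \cite[Lemma 0.6, (0.6.2)]{cts2}, and your construction (permutation surjection $P_0\surj A$, flasque coresolution $0\to N\to P_1\to S\to 0$ of the kernel obtained by dualising a coflasque resolution, then the pushout $F=(P_0\oplus P_1)/N$, with flasqueness of $F$ from extension-closedness) is a faithful reconstruction of that cited result's standard proof.
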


\begin{proof} a) is the contents of \cite[Lemma 0.6, (0.6.2)]{cts2}. b) The obstruction to
lifting $f$ lies in $\Ext^1_G(F,P')=0$ \cite[p. 182, Lemme 9]{cts}.
\end{proof}

\subsection{Functoriality of $\nu_{\le 0} G$}\label{sp4} We now assume $k$ perfect.

\begin{lemma}\label{l4} Let 
\begin{equation}\label{eq4}
0\to P\to G\to H\to 0
\end{equation}
be an exact sequence of semi-abelian
varieties, with
$P$ an invertible torus. Then $\nu_{\le 0}G[0]\iso \nu_{\le 0} H[0]$.
\end{lemma}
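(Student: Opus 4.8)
The plan is to apply the passage $G\mapsto \nu_{\le 0}G[0]$ to the short exact sequence \eqref{eq4} and to reduce everything to Lemma \ref{l3}, which gives $\nu_{\le 0}P[0]=0$ for $P$ invertible. First I would check that \eqref{eq4} is exact not only as a sequence of étale sheaves but as a sequence of Nisnevich sheaves with transfers. Since $P$ is an invertible torus, its cocharacter lattice is a direct summand of a permutation lattice, so by Shapiro's lemma and Hilbert 90 one has $H^1_\et(\Spec E,P)=0$ for every field $E/k$; hence the $P$-torsor $G\to H$ is Zariski- (hence Nisnevich-) locally trivial, so $G\to H$ is an epimorphism of Nisnevich sheaves. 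Thus \eqref{eq4} yields a distinguished triangle
\[P[0]\to G[0]\to H[0]\by{+1}\]
in $D^-(\NST)$, hence in $\DM_-^\eff$.

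Next, because $P$ is a torus it is contained in the toric part of $G$, so $G$ and $H$ have the same abelian quotient and their toric parts fit in an exact sequence $0\to P\to T\to T''\to 0$; passing to cocharacters yields $0\to L_P\to L_G\to L_H\to 0$, again exact as Nisnevich sheaves by the same vanishing of $H^1$. Tensoring the associated triangle $L_P[0]\to L_G[0]\to L_H[0]\by{+1}$ with $\G_m[0]$ (a triangulated operation) produces a distinguished triangle. The evaluation morphisms \eqref{eq2bis} are the counit of the adjunction $(-)\otimes\G_m[0]\dashv \uHom(\G_m[0],-)$ combined with the natural isomorphism $(-)_{-1}\iso L_{(-)}$ of Proposition \ref{p2}; they are therefore natural in the semi-abelian variety and assemble, together with the maps $P\to G\to H$, into a morphism from the $L\otimes\G_m[0]$ triangle to the $G$ triangle. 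A morphism of distinguished triangles induces a distinguished triangle on the cones of its three vertical components (the $3\times 3$, or ``nine'', lemma); here those cones are precisely $\nu_{\le 0}P[0]$, $\nu_{\le 0}G[0]$ and $\nu_{\le 0}H[0]$, so we obtain
\[\nu_{\le 0}P[0]\to \nu_{\le 0}G[0]\to \nu_{\le 0}H[0]\by{+1}.\]
By Lemma \ref{l3} the first term vanishes, and the middle map is the desired isomorphism $\nu_{\le 0}G[0]\iso\nu_{\le 0}H[0]$.

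The delicate point is that ``cone'' is not a functor, so two commuting squares do not by themselves produce a morphism of triangles: I must know that all three squares commute, including the one involving the connecting maps. I would secure this by choosing the coflasque resolutions \eqref{eq5} of $L_P$, $L_G$, $L_H$ compatibly, as a short exact sequence of complexes $0\to L^P_\cdot\to L^G_\cdot\to L^H_\cdot\to 0$ of invertible lattices lifting $0\to L_P\to L_G\to L_H\to 0$ (possible since all the $L_n$ may be taken invertible). By Theorem \ref{t3}(a) the evaluation maps are then realised as honest augmentations $T_\cdot\to G$ of complexes of sheaves, and $\nu_{\le 0}$ as an honest mapping cone; the whole diagram becomes a short exact sequence of complexes of Nisnevich sheaves, for which the triangle on cones is automatic. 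The only remaining verification is exactness at the level of these chain complexes, which again follows from the invertibility of every $L_n$ via the vanishing of $H^1$ used above.
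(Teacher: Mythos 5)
Your proof is correct, and its skeleton coincides with the paper's: the paper's entire argument is that invertibility of $P$ makes \eqref{eq4} exact in $\NST$, hence gives a triangle $P[0]\to G[0]\to H[0]$ in $\DM_-^\eff$, after which ``the conclusion follows from Lemma \ref{l3}''. Where you genuinely diverge is in how the triangle on the birational complexes is produced. The paper gets this for free: by \cite[Lemma 6.3]{birat} and \cite[\S 2]{motiftate}, the cone of \eqref{eq2bis} computes the birational localisation $\nu_{\le 0}$, which is a triangulated functor (a Bousfield-type localisation, where the decomposition triangle is unique and functorial because $\nu_{\le 0}M$ lies in the right orthogonal of the twisted subcategory), so applying it to the triangle above immediately yields $\nu_{\le 0}P[0]\to\nu_{\le 0}G[0]\to\nu_{\le 0}H[0]\by{+1}$ with vanishing first term. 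You, working only with the cone-of-\eqref{eq2bis} description, correctly identify that cones are not functorial and rebuild the triangle by hand: naturality of \eqref{eq2bis} plus a horseshoe-style compatible system of invertible resolutions lifting $0\to L_P\to L_G\to L_H\to 0$ (the required liftings exist because $\Ext^1_\Gamma(\text{invertible},\text{coflasque})=0$, exactly the mechanism of Lemma \ref{l10} b), and invertible lattices are coflasque), turning everything into a levelwise exact short exact sequence of complexes in $\NST$ whose associated triangle is the desired one via Theorem \ref{t3} a). This is a valid, more self-contained substitute for the localisation formalism, at the cost of length; it also has the virtue of exhibiting the map $\nu_{\le 0}G[0]\to\nu_{\le 0}H[0]$ at the chain level as the one induced by $G\to H$.

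One small imprecision to repair: vanishing of $H^1_\et(\Spec E,P)$ for \emph{fields} $E$ does not by itself give Zariski-local triviality of the torsor $G\to H$. For Zariski-local triviality you need $H^1_\et(R,P)=0$ for local rings $R$, which holds because this group is a direct summand of $\bigoplus_i\Pic(R\otimes_k E_i)=0$ ($R\otimes_k E_i$ semilocal), by Shapiro and Hilbert 90; for the Nisnevich exactness you actually need, the field case does suffice, but via the invariance $H^1_\et(R,P)\simeq H^1_\et(\kappa,P)$ for $R$ henselian local with residue field $\kappa$. The same remark applies to your lattice sequence, where surjectivity on henselian stalks follows from $H^1(\kappa,L_P)=0$, $L_P$ being invertible hence coflasque.
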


\begin{proof} As $P$ is invertible, \eqref{eq4} is exact in $\NST$ hence defines an exact
triangle
\[P[0]\to G[0]\to H[0]\by{+1}\] 
in $\DM_-^\eff$. The conclusion then follows from Lemma \ref{l3}.
\end{proof}

\begin{prop}\label{p4} Let $G,G'$ be two semi-abelian $k$-varieties, with $G$ a torus. Then a
rational map
$f:G\tto G'$ induces a morphism $f_*:\nu_{\le 0} G[0]\to \nu_{\le 0} G'[0]$, hence a
homomorphism
$f_*:G(K)/R\to G'(K)/R$ for any extension $K/k$. If $K$ is infinite, $f_*$ agrees up to
translation with the morphism induced by $f$ via the isomorphism $U(K)/R\iso G(K)/R$ from
\cite[p. 196 Prop. 11]{cts}, where
$U$ is an open subset of definition of $f$.
\end{prop}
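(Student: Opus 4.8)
The plan is to prove Proposition~\ref{p4} in two stages: first construct the morphism $f_*:\nu_{\le 0}G[0]\to \nu_{\le 0}G'[0]$ from the rational map $f$, and then identify the resulting map on $R$-equivalence classes with the geometrically defined one. For the first stage, I would invoke Lemma~\ref{l11}: since $G$ is a torus, the rational map $f$ extends, up to a translation by a rational point $g'\in G'(k)$, to an honest homomorphism $\tilde f:\tilde G\to G'$, where $\pi:\tilde G\to G$ is an extension of $G$ by a \emph{permutation} torus. A permutation torus is in particular invertible, so Lemma~\ref{l4} gives a canonical isomorphism $\pi_*:\nu_{\le 0}\tilde G[0]\iso \nu_{\le 0}G[0]$. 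The honest homomorphism $\tilde f$ induces a morphism $\tilde f_*:\nu_{\le 0}\tilde G[0]\to \nu_{\le 0}G'[0]$ functorially (a homomorphism of semi-abelian varieties is a morphism in $\NST$, hence induces a morphism on the cones $\nu_{\le 0}(-)[0]$). I would then \emph{define} $f_*:=\tilde f_*\circ \pi_*^{-1}$.

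The delicate point in the first stage is well-definedness: I must check that $f_*$ does not depend on the auxiliary choices (the open set $U$ of definition, the extension $\tilde G=\Alb(U)$, the section $s$, and the translating point $g'$), and in particular that the translation $g'$ does not affect the map on $\nu_{\le 0}$. The translation issue is the cleanest to handle: translation by a rational point of $G'$ acts trivially on $\nu_{\le 0}G'[0]$, because $\nu_{\le 0}G'[0]$ is built from $G'$ by killing the contribution of $\G_m$ and, at the level of homology computed in Theorem~\ref{t3}, the relevant groups are $R$-equivalence classes $G'(K)/R$ on which translation by a fixed $k$-point is the identity after passing to the quotient (two points differing by a constant are $R$-equivalent to their translates compatibly). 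Independence of the choice of $U$ and of $\tilde G$ should follow from the universal property of the generalised Albanese and the fact that two open subsets of definition share a common refinement, together with the functoriality already packaged in Lemmas~\ref{l5}, \ref{l6}~b) and \ref{l11}; any two choices are related by an isomorphism inducing the identity on $\nu_{\le 0}G[0]$ via Lemma~\ref{l4}.

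For the second stage, I would unwind the induced map on sections. By Theorem~\ref{t3}~b), $H_0(\nu_{\le 0}G[0])(X)=G(K)/R$ for $X$ connected smooth with function field $K$, and similarly for $G'$; so $f_*$ induces a homomorphism $f_*:G(K)/R\to G'(K)/R$ for every $K/k$. To compare this with the geometric map, I would use the isomorphism $U(K)/R\iso G(K)/R$ of \cite[p.~196 Prop.~11]{cts} (valid for $K$ infinite, since it rests on $U$ being an open dense subset of the unirational variety $G$). Under the Albanese map $s:U\to \tilde G$ and the identification $\nu_{\le 0}\tilde G[0]\iso \nu_{\le 0}G[0]$, a class in $U(K)/R$ is carried to its image under $\tilde f$, i.e. to $\tilde f(s(u))=f(u)-g'$; thus on $R$-equivalence classes $f_*$ agrees with the map $u\mapsto f(u)$ up to the constant $g'$, which is exactly the asserted ``agreement up to translation.''

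The main obstacle I expect is the well-definedness of $f_*$ in the first stage, specifically verifying that the construction is independent of the chosen extension $\tilde G$ and section $s$ and that the translation is harmless at the level of the motivic cone rather than merely at the level of the homology sheaf $G'(K)/R$. The clean way around this is to phrase everything through the functoriality of $\Alb$ and Lemma~\ref{l4}, so that any two admissible choices are canonically identified; the translation ambiguity is then genuinely only an ambiguity ``up to translation,'' which is precisely what the statement claims, so one does not need to eliminate it but only to track it through the identifications above.
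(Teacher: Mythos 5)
Your construction is exactly the paper's proof: Lemma \ref{l11} converts $f$ (up to translation by $g'\in G'(k)$) into a homomorphism $\tilde f\colon \tilde G\to G'$ with $\tilde G$ an extension of $G$ by a permutation torus, Lemma \ref{l4} lets you invert $\nu_{\le 0}\tilde G[0]\iso \nu_{\le 0}G[0]$ and set $f_*=\tilde f_*\circ\pi_*^{-1}$, and Theorem \ref{t3}~b) identifies the effect on $H_0$ as a map $G(K)/R\to G'(K)/R$ agreeing with the geometric map up to the constant $g'$ --- with your well-definedness discussion (independence of $U$ via Albanese functoriality and Lemma \ref{l4}) being a sound supplement that the paper leaves implicit. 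One aside is incorrect but harmless: translation by $g'$ is not a morphism in $\NST$ (it is not additive) and on $G'(K)/R$ it acts by adding the class of $g'$, so it does not ``act trivially on $\nu_{\le 0}G'[0]$''; this costs you nothing, since $g'$ never enters the definition $f_*=\tilde f_*\circ\pi_*^{-1}$, and the residual ambiguity is exactly the ``up to translation'' allowed by the statement.
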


\begin{proof}  By Lemma
\ref{l11},
$f$ induces a homomorphism
$\tilde G\to G'$ where
$\tilde G$ is an extension of $G$ by a permutation torus. By Lemma
\ref{l4}, the induced morphism 
\[\nu_{\le 0} \tilde G[0]\to
\nu_{\le 0} G'[0]\] 
factors through a morphism $f_*:\nu_{\le 0} G[0]\to \nu_{\le 0} G'[0]$.

The claims about $R$-equivalence classes follow from Theorem \ref{t3} b) and Lemma \ref{l11}.
\end{proof}

\begin{rk} The proof shows that $f'_*=f_*$ if $f'$ differs from $f$ by a translation by an
element of $G(k)$ or $G'(k)$.
\end{rk}

\begin{cor}\label{c6} If $T$ and $T'$ are birationally equivalent $k$-tori, then $\nu_{\le 0}
T[0]\allowbreak\simeq
\nu_{\le 0} T'[0]$. In particular, the groups $T(k)/R$ and $T'(k)/R$ are isomorphic.
\end{cor}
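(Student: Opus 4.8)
The plan is to exhibit $f$ and its birational inverse as mutually inverse morphisms of the complexes $\nu_{\le 0}T[0]$ and $\nu_{\le 0}T'[0]$, certifying that they are isomorphisms by testing on $R$-equivalence classes through Corollary \ref{c5}. Fix a birational map $f\colon T\tto T'$ and its inverse $g\colon T'\tto T$, so that $g\circ f=\mathrm{id}_T$ and $f\circ g=\mathrm{id}_{T'}$ as rational maps. By Proposition \ref{p4} these induce morphisms $f_*\colon\nu_{\le 0}T[0]\to\nu_{\le 0}T'[0]$ and $g_*\colon\nu_{\le 0}T'[0]\to\nu_{\le 0}T[0]$ in $\DM_-^\eff$, together with \emph{group} homomorphisms $f_*\colon T(K)/R\to T'(K)/R$ and $g_*\colon T'(K)/R\to T(K)/R$ for every function field $K/k$.

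The central step is to check that $f_*$ is bijective on $R$-classes for every $K$. For infinite $K$ I would invoke the last assertion of Proposition \ref{p4}: after shrinking to a common dense open $U$ of definition, $f_*$ differs by a constant translation from the set map $\bar f\colon[x]\mapsto[f(x)]$, which is well defined through the isomorphism $U(K)/R\iso T(K)/R$ of \cite[p.~196, Prop.~11]{cts}; write $\bar f=f_*+c_f$ and, symmetrically, $\bar g=g_*+c_g$ with constants $c_f\in T'(K)/R$, $c_g\in T(K)/R$. Since $g\circ f=\mathrm{id}$ on a dense open and every class is represented in $U(K)$, the naive composite satisfies $\bar g\circ\bar f=\mathrm{id}$; expanding this with $\bar g(y)=g_*(y)+c_g$ gives $g_*\circ f_*=\mathrm{id}-\bigl(g_*(c_f)+c_g\bigr)$. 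But $g_*\circ f_*$ is a group homomorphism, so evaluating at $0$ forces $g_*(c_f)+c_g=0$ and hence $g_*\circ f_*=\mathrm{id}_{T(K)/R}$; symmetrically $f_*\circ g_*=\mathrm{id}_{T'(K)/R}$. The remaining finite residue fields $E$ occur only when $k$ is finite, and I would reduce them to the infinite field $E(t)=k(\A^1_E)$: homotopy invariance of the sheaf $T/R=H_0(\nu_{\le 0}T[0])$ gives $T(E)/R\iso T(\A^1_E)/R$, while \eqref{eq6} identifies the latter with $T(E(t))/R$, all compatibly with $f_*$.

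With $f_*\colon T(K)/R\to T'(K)/R$ now bijective for every function field $K/k$, condition (ii) of Corollary \ref{c5} is satisfied, and its implication (ii)$\Rightarrow$(i) shows that $f_*\colon\nu_{\le 0}T[0]\to\nu_{\le 0}T'[0]$ is an isomorphism in $\DM_-^\eff$. Applying $H_0$ and evaluating the resulting isomorphism of sheaves $T/R\iso T'/R$ at $\Spec k$ (Theorem \ref{t3} b)) yields the asserted isomorphism $T(k)/R\cong T'(k)/R$. I expect the only real obstacle to be the bookkeeping of the translation constants: one must use that $f_*$ and $g_*$ are honest \emph{homomorphisms} differing from the naive maps by constants, so that the composite being a homomorphism upgrades ``identity up to translation'' to a genuine identity. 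Everything else is formal once Corollary \ref{c5} is available; alternatively, avoiding that corollary, one could instead establish the functoriality $g_*\circ f_*=(g\circ f)_*=\mathrm{id}$ of the construction $f\mapsto f_*$ directly at the level of complexes, but this requires a more delicate comparison through the extensions produced by Lemma \ref{l11}.
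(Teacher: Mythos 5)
Your argument is correct, but it takes a genuinely different and substantially heavier route than the paper. The paper's proof is a two-line formality: the construction of Proposition \ref{p4} is functorial for composable rational maps between tori, so with $g$ the inverse birational map one gets $g_*f_*=1_{\nu_{\le 0}T[0]}$ and $f_*g_*=1_{\nu_{\le 0}T'[0]}$ directly at the level of complexes in $\DM_-^\eff$, and the statement about $T(k)/R$ then follows from Theorem \ref{t3}~b). Ironically, this is exactly the ``alternative'' you mention and dismiss at the end as requiring a ``more delicate comparison'': the paper asserts that the functoriality $(g\circ f)_*=g_*f_*$ is visible from the proof of Proposition \ref{p4} itself (via the Albanese construction of Lemma \ref{l11} and the killing of permutation tori by Lemma \ref{l4}), so no delicacy is needed. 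Your route instead verifies condition (ii) of Corollary \ref{c5} pointwise --- the bookkeeping with the translation constants $c_f,c_g$ is done correctly (using that $f_*,g_*$ are homomorphisms while $\bar f,\bar g$ are only set maps, and evaluating at $0$), and your reduction of finite residue fields to $E(t)$ via homotopy invariance of $T/R$ and \eqref{eq6} is a legitimate fix for the ``$K$ infinite'' hypothesis in Proposition \ref{p4} --- but then invokes the implication (ii)$\Rightarrow$(i) of Corollary \ref{c5}, whose proof rests on Corollary \ref{c4}~b) and hence on Theorem \ref{t4}~b), the deepest result of the paper (Gille's lemma, coflasque resolutions, Lemma \ref{l10}). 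This is not circular, since the proof of Theorem \ref{t4} nowhere uses Corollary \ref{c6}, so your proof stands; but it converts a formal consequence of Proposition \ref{p4}, proved in the paper \emph{before} Theorem \ref{t4}, into a corollary of the main theorem. What your approach buys is an explicit verification on $R$-equivalence classes that is independent of the functoriality claim; what the paper's approach buys is logical economy and a canonical identification $g_*f_*=1$ at the motivic level rather than one mediated by the extensions of Theorem \ref{t4}.
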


\begin{proof} The proof of Proposition \ref{p4} shows that $f\mapsto f_*$ is functorial for
composable rational maps between tori. Let
$f:T\tto T'$ be a birational isomorphism, and let
$g:T'\tto T$ be the inverse birational isomorphism. Then we have $g_*f_*=1_{\nu_{\le 0}T[0]}$
and $f_*g_*=1_{\nu_{\le 0}T'[0]}$. The last claim follows from Theorem \ref{t3}.
\end{proof}

\begin{rk}\label{r1} It is proven in \cite{cts} that a birational isomorphism of tori $f:T\tto
T'$ induces a set-theoretic bijection $f_*:T(k)/R\iso T'(k)/R$  (p. 197, Cor. to Prop. 11) and
that the \emph{group} $T(k)/R$ is abstractly a birational invariant of $T$ (p. 200, Cor.
4). The proof above shows that  $f_*$ is an isomorphism of
groups if $f$ respects the origins of $T$ and $T'$. This solves the question raised in
\cite[mid. p. 397]{cts}. The proofs of Lemma
\ref{l11} and Proposition \ref{p4} may be seen as dual to the proof of \cite[p. 189, Prop.
5]{cts}, and are directly inspired from it.
\end{rk}

\subsection{Faithfulness and fullness}\label{proof of t4}

\begin{prop}\label{p6} Let $f:G\tto G'$ be a rational map between semi-abelian varieties, with
$G$ a torus. Assume that the map
$f_*:G(K)/R\to G'(K)/R$ from Proposition \ref{p4} is identically $0$ when $K$ runs through the
finitely generated extensions of $k$. Then there exists a permutation torus $P$ and a
factorisation of $f$ as
\[G\overset{\tilde f}{\tto} P\by{g}G'\]
where $\tilde f$ is a rational map and $g$ is a homomorphism. If $f$ is a morphism, we may choose $\tilde f$ as a homomorphism.\\
Conversely, if there is such a factorisation, then $f_*:\nu_{\le 0} G[0]\to \nu_{\le 0} G'[0]$
is the $0$ morphism.
\end{prop}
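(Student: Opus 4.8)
The plan is to dispatch the converse directly and then reduce the forward implication to a statement about lattice maps which I detect at the generic point of $T$. For the converse, suppose $f$ factors as $G\overset{\tilde f}{\tto}P\by{g}G'$ with $P$ a permutation torus, $\tilde f$ rational and $g$ a homomorphism. As $P$ is invertible, Lemma \ref{l3} gives $\nu_{\le 0}P[0]=0$; by the functoriality of $h\mapsto h_*$ for rational maps out of tori (Proposition \ref{p4} and the proof of Corollary \ref{c6}), $\tilde f_*$ lands in $\nu_{\le 0}P[0]=0$, whence $f_*=g_*\tilde f_*=0$. For the forward implication I first reduce to a homomorphism of tori. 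If $f$ is merely rational, Lemma \ref{l11} gives an extension $\bar G$ of $G$ by a permutation torus, a homomorphism $\bar f:\bar G\to G'$ and a rational section $s$ with $f=\bar f s$; by Lemma \ref{l4} and Theorem \ref{t3} the groups $\bar G(K)/R$ and $G(K)/R$ agree and $\bar f_*=f_*$, so $\bar f$ again satisfies the hypothesis. If $f$ is a morphism I instead use Lemma \ref{l6}a) to write $f=f(0)+f'$ with $f'$ a homomorphism, and argue with $f'$ (translations do not change $f_*$, by the remark after Proposition \ref{p4}). In both cases the relevant map is a homomorphism $F$ out of a torus; composing with $G'\to A'$ kills it, so $F$ factors through the toric part $T'$ of $G'$, and $T'(K)/R\inj G'(K)/R$ (Lemma \ref{l1}) preserves the hypothesis. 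It therefore suffices to show: a homomorphism of tori $F:T\to T'$ whose induced map on $R$-equivalence vanishes over $K=k(T)$ factors as $T\to P\to T'$ through homomorphisms with $P$ a permutation torus. Fed back, this yields $f=g\circ(Fs)$ with $Fs:G\tto P$ rational in general, while for a morphism $f$ one gets the honest homomorphism $F$, up to the inessential translation $f(0)$.

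Next I pass to cocharacter lattices over $\Gamma=\operatorname{Gal}(E/k)$ for a finite Galois splitting field $E$, writing $\mu:L\to L'$ for the map induced by $F$. A homomorphic factorisation of $F$ through a permutation torus is exactly a factorisation of $\mu$ through a permutation lattice; since an invertible lattice is a direct summand of a permutation one, this is equivalent to a factorisation of $\mu$ through an invertible lattice. Fix a coflasque resolution $0\to Q'\to L'_0\to L'\to 0$ as in \eqref{eq3}, with $L'_0$ invertible and $Q'$ coflasque, with corresponding tori $1\to S'\to T'_0\to T'\to 1$. Then $\mu$ factors through an invertible lattice if and only if it lifts along $L'_0\to L'$, i.e. if and only if its image $\partial\mu\in\Ext^1_\Gamma(L,Q')$ under the connecting map vanishes: a lift gives a factorisation through $L'_0$, and conversely any factorisation through an invertible lattice lifts because $\Ext^1_\Gamma$ of an invertible lattice into a coflasque one vanishes (as in the proof of Lemma \ref{l10}).

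It remains to force $\partial\mu=0$ from the vanishing of $F_*$ on $R$-equivalence over $K=k(T)$. Let $\eta\in T(K)$ be the tautological point. By Theorem \ref{t3}, $T'(K)/R=\Coker(T'_0(K)\to T'(K))$, so $F_*[\eta]=[F(\eta)]=0$ means $F(\eta)$ lifts to $T'_0(K)$, i.e. its image $\delta(F(\eta))\in H^1(K,S')$ under the boundary of $1\to S'\to T'_0\to T'\to 1$ vanishes. The crux is to identify $\delta(F(\eta))$ with the image of $\partial\mu$. Using $\Ext^1_\Gamma(L,Q')=H^1(\Gamma,\Hom_\Z(L,Q'))$ (valid since $L,Q'$ are lattices, so the local $\uExt^1$ term vanishes) and $S'(E(T))=Q'\otimes E(T)^*$, naturality of the boundary applied to the universal point $\eta$ shows that $\delta(F(\eta))$ is the image of $\partial\mu$ under $\iota_*$, where $\iota:\Hom_\Z(L,Q')=X(T)\otimes Q'\to E(T)^*\otimes Q'$ is induced by the inclusion $X(T)\inj E(T)^*$ of characters of $T$ as invertible functions. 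Finally, for a torus every invertible function is a constant times a character, so $E(T)^*/E^*\iso X(T)$ as $\Gamma$-modules and $X(T)\inj E(T)^*$ is $\Gamma$-split; tensoring with $Q'$ and passing to $H^1(\Gamma,-)$, then inflating injectively to $H^1(K,S')$, shows $\iota_*$ is injective. Hence $\partial\mu=0$, giving the factorisation sought above.

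The converse and the reductions are routine; the main obstacle is the third paragraph. Everything there rests on two compatibilities that must be checked by hand: that the lifting obstruction $\delta(F(\eta))$ of the universal point equals $\iota_*(\partial\mu)$, and that $\iota_*$ is injective. Both require matching the Galois cohomology of the tori $S',T'_0$ over $k(T)$ with the lattice group $\Ext^1_\Gamma(L,Q')$ through the tautological point $\eta$, and both exploit the special feature of tori that $E(T)^*/E^*$ is nothing but the character lattice. This is the step I expect to demand the most care, in particular the bookkeeping of the character/cocharacter dualities.
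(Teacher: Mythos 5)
Your reductions are sound: passing to a homomorphism of tori via Lemmas \ref{l11} and \ref{l6}~a) and Lemma \ref{l1}, translating the factorisation into $\partial\mu=0$ in $\Ext^1_\Gamma(L,Q')$ (using $\Ext^1_\Gamma(I,Q')=0$ for $I$ invertible, $Q'$ coflasque), and the compatibility $\delta(F(\eta))=\iota_*(\partial\mu)$ are all correct, and your converse is the paper's own (Lemma \ref{l3}). But the step you yourself flagged as the crux is where the proof breaks: the claimed $\Gamma$-splitting of $X(T)\inj E(T)^\times$ is false. Rosenlicht's unit theorem splits $X(T)$ off the group $E[T]^\times$ of invertible \emph{regular} functions (via $f\mapsto f/f(1)$), not off the multiplicative group of the function field. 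Indeed, if $X(T)$ were a $\Gamma$-direct summand of $E(T)^\times$, then $H^1(\Gamma,X(T))$ would inject into $H^1(\Gamma,E(T)^\times)$, which vanishes by Hilbert 90 applied to the Galois extension $E(T)/k(T)$ of group $\Gamma$; but $H^1(\Gamma,X(T))\neq 0$ in general (it is $\Z/2$ for the norm-one torus of a quadratic extension). After splitting off $E[T]^\times$ one is left with the exact sequence $0\to E[T]^\times\to E(T)^\times\to \operatorname{Div}(T_E)\to 0$, and the kernel of $H^1(\Gamma,E[T]^\times\otimes Q')\to H^1(\Gamma,E(T)^\times\otimes Q')$ is the image of the connecting map from $H^0(\Gamma,\operatorname{Div}(T_E)\otimes Q')$, which nothing in your argument prevents from meeting the summand $H^1(\Gamma,X(T)\otimes Q')$. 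Worse, the injectivity you need (even just on the image of $\partial:\Hom_\Gamma(L,L')\to\Ext^1_\Gamma(L,Q')$) asserts precisely that an extension of $T$ by the flasque torus $S'$ splits as soon as its generic fibre is a trivial torsor --- which is essentially the proposition itself in the case of homomorphisms of tori. So your scheme does not prove the statement; it reformulates it as an unproven cohomological injectivity of comparable depth.

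The paper's proof supplies exactly the mechanism your detection lacks, and it is geometric rather than cohomological. After reducing to $f$ a morphism by Lemma \ref{l11}, it invokes a lemma of Gille \cite[Lemme II.1.1 b)]{gille}: since $f(\eta_G)$ is $R$-equivalent to $0$ over $K=k(G)$, it is \emph{directly} $R$-equivalent to $0$, so there is a single rational map $h:G\times\A^1\tto G'$, defined near $G\times\{0\}$ and $G\times\{1\}$, with $h_{|G\times\{0\}}=0$ and $h_{|G\times\{1\}}=f$. Taking an open set of definition $U\subseteq G\times\A^1$, the $0$- and $1$-sections give $s_0,s_1:G\to\Alb(U)$ splitting $\pi:\Alb(U)\to\Alb(G\times\A^1)=G$, with $\Alb(h)\circ s_0=0$ and $\Alb(h)\circ s_1=f$; hence $\tilde f=s_0-s_1$ is a homomorphism into $P=\Ker\pi$, which is a permutation torus by Lemma \ref{l5}, and $f$ factors as $G\by{\tilde f}P\to\Alb(U)\by{\Alb(h)}G'$. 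In other words, the hypothesis is converted into an actual algebraic homotopy whose Albanese produces the permutation torus, sidestepping entirely the question of whether $\Ext^1_\Gamma(L,Q')\to H^1(K,S')$ is injective. To rescue your approach you would have to prove that injectivity (at least on connecting images) independently --- a statement I do not see how to obtain short of redoing the Gille--Albanese argument.
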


\begin{proof} By Lemma
\ref{l11}, we may reduce to the case where $f$ is a morphism. Let
$K=k(G)$. By hypothesis, the image of the generic point $\eta_G\in G(K)$ is $R$-equivalent to
$0$ on $G'(K)$. By a lemma of Gille
\cite[Lemme II.1.1 b)]{gille}, it is directly $R$-equivalent to $0$: in other words, there
exists a rational map $h:G\times \A^1\tto G'$, defined in the neighbourhood of $0$ and $1$,
such that $h_{|G\times\{0\}} = 0$ and $h_{|G\times\{1\}} = f$.

Let $U\subseteq G\times \A^1$ be an open set of definition of $h$. The $0$ and $1$-sections of
$G\times \A^1\to G$ induce sections
\[s_0,s_1:G\to \Alb(U)\]
of the projection $\pi:\Alb(U)\to \Alb(G\times
\A^1)=G$ such that $\Alb(h)\circ s_0=0$ and $\Alb(h)\circ s_1=f$. If $P=\Ker\pi$, then
$s_0-s_1$ induces a homomorphism $\tilde f:G\to P$ such that the composition
\[G\by{\tilde f}P\to \Alb(U)\by{\Alb(h)} G'\]
equals $f$. Finally, $P$ is a permutation torus by Lemma \ref{l5}.

The last claim follows from Lemma \ref{l3}.
\end{proof}

\begin{proof}[Proof of Theorem \ref{t4}] a)  Take
$K=k(G)$. The image of the generic point
$\eta_G$ by
$f_K$ lifts to a (non unique) rational map $f:G\tto G'$. Using Lemma \ref{l11}, we may extend
$f$ to a homomorphism
\[\tilde f:\tilde G\to G'\]
where $\tilde G$ is an extension of $G$ by a permutation torus $P$. Since $\tilde
G(K)/R\allowbreak\iso G(K)/R$, we reduce to $\tilde G = G$ and $\tilde f=f$.

Let $L/k$ be a fonction field, and let $g\in G(L)$. Then $g$ arises from a morphism $g:X\to G$
for a suitable smooth model $X$ of $L$. By assumption on $K\mapsto f_K$, the diagram
\[\begin{CD}
G(K)/R @>f_K>> G'(K)/R\\
@V{g^*}VV @V{g^*}VV\\
G(L)/R @>f_L>> G'(L)/R
\end{CD}\]
commutes. Applying this to $\eta_K\in G(K)$, we find that $f_L([g])=[g\circ f]$, which means
that $f_L$ is the map induced by $f$.

b) The hypothesis implies that $G'(E)/R=0$ for any algebraically closed extension $E/k$, which
in turn implies that $G'$ is also a torus. Applying a), we may, and do, convert $f$ into a
true homomorphism by replacing $G$ by a suitable extension by a permutation torus. Applying
Lemma \ref{l10} a) to the cocharacter group of $G$,  we get a resolution
$0\to P_1\to Q\to G\to 0$  with $Q$ coflasque and $P_1$
permutation. Hence we may (and do) further assume $G$ coflasque.

Let
$K=k(G')$ and choose some
$g\in G(K)$ mapping modulo $R$-equivalence to the generic point of $G'$. Then $g$ defines a
rational map
$g:G'\tto G$ such that $fg$ is $R$-equivalent to $1_{G'}$. It follows that the induced map
\begin{equation}\label{eq8}
1 - fg:G'/R\to G'/R
\end{equation}
is identically $0$. 

Reapplying Lemma \ref{l11}, we may find an extension $\tilde G'$ of $G'$ by a suitable
permutation torus which converts $g$ into a true homomorphism. Since $G$ is
coflasque, Lemma \ref{l10} b) shows that $f:G\to G'$ lifts to $\tilde f:G\to \tilde G'$. 
 Then \eqref{eq8} is still identically $0$ when replacing $(G',f)$ by $(\tilde G',\tilde f)$.

Summarising: we have replaced the initial $G$ and $G'$ by suitable extensions by permutation
tori, such that $f$ lifts to these extensions and there is a homomorphism $g:G'\to G$ such that
\eqref{eq8} vanishes identically. Hence $1-fg$ factors through a permutation torus $P$ thanks to
 Proposition \ref{p6}.
Write $u:G'\to P$ and $v:P\to G'$ for homomorphisms such that $1-fg= vu$. Let $G_1 = G\times P$
and consider the maps
\[f_1=(f,v):G_1\to G',\qquad g_1 = \begin{pmatrix} g\\ u\end{pmatrix}:G'\to G_1.\]

Then $f_1g_1 = 1$ and $G'$ is a direct summand of $G_1$ as requested.
\end{proof}

\section{Some open questions}\label{s5}

\begin{qn}\label{q1} Are lemma \ref{l11} and Proposition \ref{p4} still true when $G$ is not a
torus?
\end{qn}

This is far from clear in general, starting with the case where $G$
is an abelian variety and $G'$ a torus. Let me give a positive answer in the case of an
elliptic curve.

\begin{prop} \label{p5} The answer to Question \ref{q1} is yes if the abelian part $A$ of $G$ is
an elliptic curve.
\end{prop}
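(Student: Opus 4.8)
The plan is to answer both halves of Question \ref{q1} by running the proofs of Lemma \ref{l11} and Proposition \ref{p4} essentially verbatim; the only point that needs a new idea is the verification that the torus produced by the Albanese construction is still a permutation (or at least invertible) torus. As in Proposition \ref{p4}, once the analogue of Lemma \ref{l11} is in hand the statement about $\nu_{\le 0}$ follows formally from Lemma \ref{l4} (a permutation torus is invertible). So I would concentrate on the analogue of Lemma \ref{l11}.

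Let $f:G\tto G'$ be a rational map with $G$ semi-abelian whose abelian part $A$ is an elliptic curve, and let $\pi:G\to A$ be the projection with toric part $T_0=\Ker\pi$. Mimicking the proof of Lemma \ref{l11}, I would take an open $U$ of definition of $f$, set $\tilde G=\Alb(U)$ and $\tilde f=\Alb(f)$, and apply Lemmas \ref{l5} and \ref{l6} b) to get an extension $0\to T\to \tilde G\to G\to 0$ by a torus $T$ together with the exact sequence of character groups
\[0\to X(T)\to \bigoplus_{x\in G^{(1)}-U^{(1)}}\Z\to \NS(\bar G)\to 0\]
valid once $U$ is small enough that $\NS(\bar U)=0$. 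Everything then reduces to showing that the Galois module $X(T)$ is a permutation module.

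This is where the hypothesis on $A$ is used. Since $\dim A=1$ we have $\NS(\bar A)=\Z$ with \emph{trivial} Galois action (degree is Galois-invariant), generated by the class of any $k$-rational point; by Lemma \ref{l9} the same holds for $\NS(\bar G)$, a generator being the class of $\pi^{-1}(P)$ for a point $P\in A(k)$. Such a fibre is a coset of $T_0$, hence a geometrically irreducible $k$-rational divisor, so over $\bar k$ it is a single Galois-fixed basis vector of the permutation module $M=\bigoplus_x\Z$. I would choose $U$ so that $\pi^{-1}(P)$ is among the removed divisors (and shrink further to achieve $\NS(\bar U)=0$). The class map $\Phi:M\to \NS(\bar G)=\Z$ then sends this fixed basis vector to a generator, so it splits: writing $M=\Z\cdot[\pi^{-1}(P)]\oplus M_1$ with $M_1$ permutation and $\phi=\Phi|_{M_1}$, the assignment $m_1\mapsto m_1-\phi(m_1)[\pi^{-1}(P)]$ is a Galois-equivariant isomorphism $M_1\iso X(T)$. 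Hence $X(T)$ is permutation and $T$ is a permutation torus. The rest of Lemma \ref{l11} (the rational section $s$ and the translation $g'$) goes through unchanged, using that a torsor under a permutation torus is Zariski-locally trivial (Hilbert 90), and, over a finite field, degree-$1$ zero-cycles exactly as in the original proof.

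The main obstacle, and the reason the general case of Question \ref{q1} stays open, is precisely this last identification: it uses that $\NS(\bar A)$ has rank $1$, carries a trivial Galois action, and is generated by a geometrically irreducible $k$-rational divisor, all three of which are special to elliptic curves. For $\dim A\ge 2$ the module $\NS(\bar A)$ may have higher rank and nontrivial Galois action, and there is in general no supply of $k$-rational geometrically irreducible divisors splitting $\Phi$, so $X(T)$ need not even be invertible. A secondary technical nuisance is compatibility of the choices: one must keep the divisor $\pi^{-1}(P)$ in the complement of $U$ while retaining a base point, or a degree-$1$ zero-cycle, on $U$ for the Albanese map, which over small or finite fields demands the same care as in Lemma \ref{l11}.
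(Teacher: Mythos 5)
Your proposal is correct and takes essentially the same route as the paper's proof: both pass to $\tilde G=\Alb(U)$, use Lemmas \ref{l5} and \ref{l9} to identify $\NS(\bar G)\simeq\NS(\bar A)\simeq\Z$, shrink $U$ so that the geometrically irreducible, $k$-rational fibre of $G\to A$ over a rational point (the paper takes $0\in A$, you take any $P\in A(k)$) is among the removed divisors, and use its class to split $\bigoplus_x\Z\to\NS(\bar G)$ Galois-equivariantly, concluding that $X(T)$ is permutation and finishing as in Lemma \ref{l11} and Proposition \ref{p4}. The paper states this splitting dually (the cokernel of $\G_m\to P$ is still a permutation torus); your explicit isomorphism $M_1\iso X(T)$ merely spells out what the paper leaves implicit.
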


\begin{proof} Arguing as in the proof of Proposition \ref{p4}, we get for an open subset
$U\subseteq G$ of definition for $f$ an exact sequence
\[0\to \G_m\to P\to \Alb(U)\to G\to 0 \]
where $P$ is a permutation torus. Here we used that $\NS(\bar G)\simeq \Z$, which follows from
Lemma
\ref{l9}. 

The character group $X(P)$ has as a basis the geometric irreducible components of codimension
$1$ of $G-U$. Up to shrinking $U$, we may assume that $G-U$ contains the inverse image $D$ of
$0\in A$. As the divisor class of $0$ generates $\NS(\bar A)$, $D$ provides a Galois-equivariant
splitting of the map $\G_m\to P$. Thus its cokernel is still a permutation torus, and we
conclude as before.
\end{proof}

\begin{qn} \label{q2} Can one formulate a version of Theorem \ref{t4} and Corollary \ref{c4}
providing a description of the groups $\Hom_{\DM_-^\eff}(\nu_{\le 0} G[0], \nu_{\le 0} G'[0])$
and
$\Hom_\HI(G/R,G'/R)$ (at least when
$G$ and
$G'$ are tori)?
\end{qn}

The proof of Theorem \ref{t4} suggests the presence of a closed model structure on the category
of tori (or lattices), which might provide an answer to this question.

For the last question, let $G$ be a semi-abelian variety. Forgetting its group structure, it
has a motive $M(G)\in \DM_-^\eff$. Recall the canonical morphism 
\[M(G)\to G[0]\]
induced by the ``sum" maps
\begin{equation}\label{eq9}
c(X,G)\by{\sigma} G(X)
\end{equation}
for smooth varieties $X$ (\cite[(6), (7)]{spsz}, \cite[\S 1.3]{bvk}).

The morphism \eqref{eq9} has a canonical section 
\begin{equation}\label{eq10}
G(X)\by{\gamma} c(X,G) 
\end{equation}
given by the graph of a morphism: this section is functorial in $X$ but is not additive.

Consider now a smooth equivariant compactification $\bar G$ of $G$. It exists in all
characteristics. For tori, this is written up in \cite{cths}. The general case reduces to this
one by the following elegant argument I learned from M. Brion: if $G$ is an extension of an
abelian variety $A$ by a torus $T$, take a smooth projective equivariant compactification $Y$
of $T$. Then the bundle $G \times^T Y$  associated to the $T$-torsor $G
\to A$ also exists: this is the desired compactification.

Then we have a diagram of birational motives
\begin{equation}\label{eq11}
\begin{CD}
\nu_{\le 0} M(G)@>\sim>> \nu_{\le 0} M(\bar G)\\
@V{\nu_{\le 0}\sigma}VV\\
\nu_{\le 0} G[0].
\end{CD}
\end{equation}

By \cite{birat}, we have $H_0(\nu_{\le 0} M(\bar G))(X) = CH_0(\bar G_{k(X)})$ for any smooth
connected $X$. Hence the above diagram induces a homomorphism
\begin{equation}\label{eq9b}
CH_0(\bar G_{k(X)})\to G(k(X))/R
\end{equation}
which is natural in $X$ for the action of finite correspondences (compare Corollary \ref{c3}).
One can probably check that this is the homomorphism of \cite[(17) p. 78]{merk}, reformulating
\cite[Proposition 12 p. 198]{cts}. Similarly, the set-theoretic map
\begin{equation}\label{eq10b}
G(k(X))/R\to CH_0(\bar G_{k(X)})
\end{equation}
of \cite[p. 197]{cts} can presumably be recovered as a birational version of \eqref{eq10},
using perhaps the homotopy category of schemes of Morel and Voevodsky \cite{mv}.

In \cite{merk}, Merkurjev shows that \eqref{eq9b} is an isomorphism for $G$ a torus of
dimension at most $3$. This suggests:

\begin{qn} Is the map $\nu_{\le 0}\sigma$ of Diagram \eqref{eq11} an isomorphism when $G$ is a
torus of dimension $\le 3$?
\end{qn}

In \cite{merk2}, Merkurjev gives examples of tori $G$ for which \eqref{eq10b} is not a
homomorphism; hence its (additive) left inverse \eqref{eq9b} cannot be an isomorphism.
Merkurjev's examples are of the form $G= R^1_{K/k}\G_m\times R^1_{L/k}\G_m$, where $K$ and $L$
are distinct biquadratic extensions of $k$. This suggests:

\begin{qn} Can one study Merkurjev's examples from the above viewpoint? More generally, what is
the nature of the map $\nu_{\le 0}\sigma$ of Diagram \eqref{eq11}?
\end{qn}

We leave all these questions to the interested reader.

\end{document}